\documentclass[reqno, twoside, a4paper]{amsart}

\usepackage{amsmath}
\usepackage{amssymb}
\usepackage{amsthm}
\usepackage{graphicx}
\usepackage{subfig}
\usepackage{enumerate}
\usepackage{hyperref}

\hypersetup{%
pdftitle={Symplectic fillings of quotient surface singularities and minimal model program},%
pdfauthor={Hakho Choi, Heesang Park, Dongsoo Shin},%
pdfkeywords={minimal model program, quotient surface singularity, rational blow-down, symplectic antiflip, symplectic filling},%
colorlinks=true,%
citecolor=black,%
filecolor=black,%
linkcolor=black,%
urlcolor=black,%
}

\usepackage{comment}
\usepackage{xspace}
\usepackage{mathtools}

\usepackage{mathptmx}       
\usepackage{helvet}         
\usepackage{courier}        


\usepackage{tikz}

\usetikzlibrary{patterns,decorations.pathreplacing}
\usetikzlibrary{decorations.pathmorphing}

\tikzset{bullet/.style={
shape = circle,fill = black, inner sep = 0pt, outer sep = 0pt, minimum size = 0.35em, line width = 0pt, draw=black!100}}

\tikzset{circle/.style={
shape = circle,fill = none, inner sep = 0pt, outer sep = 0pt, minimum size = 0.35em, line width = 1pt, draw=black!100}}

\tikzset{rectangle/.style={
shape = rectangle,fill = white, inner sep = 0pt, outer sep = 0pt, minimum size = 0.35em, line width = 0pt, draw=black!100}}

\tikzset{empty/.style={
shape = circle,fill = white, inner sep = 0pt, outer sep = 0pt, minimum size = 0.35em, line width = 0pt, draw=white!100}}

\tikzset{xmark/.style={
shape = x,fill = white, inner sep = 0pt, outer sep = 0pt, minimum size = 0em, line width = 0pt, draw=white!100}}

\tikzset{longrectangle/.style={
inner sep = 1em,
rectangle,
minimum size=1em,
very thick,
draw=black!100, 
}}

\tikzset{label distance=-0.15em}

\tikzset{font=\scriptsize}

\usepackage{tikz-cd}
\tikzcdset{every label/.append style = {font = \normalsize}}
\usepackage{tkz-graph}

\newtheorem{theorem}{Theorem}[section]
\newtheorem{lemma}[theorem]{Lemma}
\newtheorem{proposition}[theorem]{Proposition}
\newtheorem{corollary}[theorem]{Corollary}

\theoremstyle{definition}

\newtheorem{definition}[theorem]{Definition}
\newtheorem{remark}[theorem]{Remark}

\numberwithin{equation}{section}



\begin{document}

\title[Symplectic fillings and minimal model program]{Symplectic fillings of quotient surface singularities and minimal model program}

\author[H. Choi]{Hakho Choi}

\address{School of Mathematics, Korea Institute for Advanced Study, 85 Hoegiro, Dongdaemun-gu, Seoul 02455, Republic of Korea}

\email{hakho@kias.re.kr}

\author[H. Park]{Heesang Park}

\address{Department of Mathematics, Konkuk University, Seoul 05029 \& School of Mathematics, Korea Institute for Advanced Study, 85 Hoegiro, Dongdaemun-gu, Seoul 02455, Republic of Korea}

\email{HeesangPark@konkuk.ac.kr}

\author[D. Shin]{Dongsoo Shin}

\address{Department of Mathematics, Chungnam National University, Daejeon 34134, Republic of Korea}

\email{dsshin@cnu.ac.kr}

\subjclass[2010]{57R40, 57R55, 14B07}

\keywords{minimal model program, quotient surface singularity, rational blow-down, symplectic antiflip, symplectic filling}

\begin{abstract}
We prove that every minimal symplectic filling of the link of a quotient surface singularity can be obtained from its minimal resolution by applying a sequence of rational blow-downs and symplectic antiflips. We present an explicit algorithm inspired by the minimal model program for complex 3-dimensional algebraic varieties.
\end{abstract}

\maketitle

\section{Introduction}

Let $(X,0)$ be a quotient surface singularity. A \emph{symplectic filling} of $X$ is a symplectic filling of the link $L$ of $X$, that is, a symplectic $4$-manifold $W$ with $L$ as its boundary such that the induced contact structure on $L$ coming from the symplectic structure of the interior of $W$ is compatible with the Milnor fillable contact structure on $L$.

Lisca~\cite{Lisca-2008} classify symplectic fillings of cyclic quotient surface singularities (up to symplectic deformation equivalence) as complements of certain symplectic spheres (depending only on the singularities) in certain rational symplectic $4$-manifolds. Bhupal-Ono~\cite{Bhupal-Ono-2012} (refer Bhupal-Ono~\cite{Bhupal-Ono-2017} also) classify that of non-cyclic quotient surface singularities also as the complements. Meanwhile, Bhupal-Ozbagci~\cite{Bhupal-Ozbagci-2016} show that every minimal symplectic filling of $X$ can be constructed from the minimal resolution of $X$ by a sequence of rational blow-downs in some sense. For this, they construct a positive allowable Lefschetz fibration over the disk on each minimal symplectic fillings of any cyclic quotient surface singularity. Using a similar strategy, Choi-Park~\cite{Choi-Park-2018} prove a similar result for non-cyclic quotient surface singularities.

On the other hand, PPSU~\cite{PPSU-2018} prove a comparable result to Bhupal-Ozbagci~\cite{Bhupal-Ozbagci-2016} and Choi-Park~\cite{Choi-Park-2018} by using techniques from algebraic geometry. PPSU~\cite[Theorem~11.3]{PPSU-2018} show that minimal symplectic fillings of quotient surface singularities are exactly their Milnor fibers: For any minimal symplectic filling $W$ of a quotient surface singularity $X$ there is a smoothing $\pi \colon (X \subset \mathcal{X}) \to (0 \in \mathbb{D})$ over a small disk $\mathbb{D} (\subset \mathbb{C})$ such that the Milnor fiber of the smoothing $\pi$ (i.e., a general fiber $X_t = \pi^{-1}(t)$ ($t \neq 0)$) is diffeomorphic to the symplectic filling $W$. By the way, KSB~\cite[Theorem~3.9]{Kollar-Shepherd-Barron-1988} show that every Milnor fiber of a quotient surface singularity $X$ is a general fiber of a $\mathbb{Q}$-Gorenstein smoothing of a certain special partial resolution (called, \emph{$P$-resolution}) of $X$ and that every $P$-resolution is dominated by the so-called maximal resolution of $X$, where a $\mathbb{Q}$-Gorenstein smoothing may be regarded as an analogue of the rational blow-down surgery. Putting it all together in the language of topology, PPSU~\cite[Theorem~11.3]{PPSU-2018} show that every minimal symplectic filling of a quotient surface singularity is obtained from its maximal resolution by a sequence of rational blow-downs. But maximal resolutions are not equal to minimal resolutions in general. They are obtained from minimal resolutions by ordinary blow-ups; cf.~KSB~\cite[Lemma~3.13]{Kollar-Shepherd-Barron-1988}.

In this paper we produce an explicit algorithm for constructing Milnor fibers (that is, $\mathbb{Q}$-Gorenstein smoothings of $P$-resolutions) symplectically from the minimal resolutions (not from maximal resolutions), where the algorithm is inspired by the minimal model program of 3-dimensional complex varieties. As a result, we show that:

\begin{theorem}\label{theorem:main}
For each minimal symplectic filling $W$ of a quotient surface singularity $X$, there is a sequence of rational blow-downs and symplectic antiflips that transforms the minimal resolution of $\widetilde{X}$ to a $4$-manifold that is diffeomorphic to the symplectic filling $W$.
\end{theorem}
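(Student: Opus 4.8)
The plan is first to reduce to a purely algebraic target and then to realize the passage by explicit symplectic surgeries. Combining PPSU~\cite[Theorem~11.3]{PPSU-2018} with KSB~\cite[Theorem~3.9]{Kollar-Shepherd-Barron-1988}, I would replace the given minimal symplectic filling $W$ by the Milnor fiber of a $\mathbb{Q}$-Gorenstein smoothing of a $P$-resolution $f \colon Y \to X$; since only the diffeomorphism type is asserted, it then suffices to transform the minimal resolution $\widetilde{X}$ into this Milnor fiber. Recall that $Y$ carries only singularities of class $T$, each admitting a $\mathbb{Q}$-Gorenstein smoothing whose effect on a nearby fiber is, topologically, a rational blow-down of the linear chain resolving it. Thus the concrete goal is to manufacture out of $\widetilde{X}$ the exceptional configuration of the minimal resolution of $Y$ and then rationally blow down its class $T$ chains.

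To build the bridge from $\widetilde{X}$ to $Y$ I would avoid passing through the maximal resolution, which dominates $Y$ but is obtained from $\widetilde{X}$ by ordinary blow-ups (KSB~\cite[Lemma~3.13]{Kollar-Shepherd-Barron-1988}), operations outside our allowed list. Instead I would organize the passage as a sequence of extremal steps modeled on the three-dimensional minimal model program. On the total space of the deformation, a normal threefold over the disk interpolating between the trivial deformation that carries $\widetilde{X}$ and the $\mathbb{Q}$-Gorenstein deformation that carries $W$, I would run the relative MMP over the base so that the comparison of these two families decomposes into extremal divisorial contractions and flips.

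The heart of the argument is a dictionary translating each threefold step into a symplectic surgery of the central fiber. An extremal divisorial contraction whose contracted locus is a class $T$ chain becomes a rational blow-down; a flipping contraction together with its flip becomes, read in the direction from $\widetilde{X}$ to $W$, a symplectic antiflip, namely the surgery that extracts the new exceptional curves a blow-up would have supplied, but within the symplectic framework. Iterating the dictionary produces the advertised explicit algorithm: scan the current configuration for a contractible class $T$ chain and rationally blow it down; when none is present yet the target still requires further exceptional curves, perform a symplectic antiflip to create them; repeat until the configuration is exactly that of the $\mathbb{Q}$-Gorenstein smoothing, whose general fiber is $W$.

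The main obstacle is to make this dictionary both rigorous and effective. I expect the hard part to lie in showing that the threefold divisorial contractions and flips can be chosen so that each one induces precisely the claimed local surgery on the central fiber, that every such surgery is genuinely symplectic, that is, compatible with the Milnor fillable contact structure on the link and with the symplectic form on the interior, and that the scan-and-antiflip loop terminates with the correct diffeomorphism type rather than a merely homeomorphic manifold. Keeping the singularities encountered along the way within the cyclic quotient / class $T$ range, so that the fiberwise operations stay equal to the known local models, is the crux; once this control and termination are established, the identification of the algorithm's output with $W$ follows from the Milnor fiber description supplied by KSB and PPSU.
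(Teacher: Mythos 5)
You have identified the right first reduction (fillings are Milnor fibers of $\mathbb{Q}$-Gorenstein smoothings of $P$-resolutions, via PPSU and KSB), but there is one step that fails as stated and a central gap you yourself concede. The failing step: you propose to rationally blow down the class $T$ chains of the $P$-resolution $Y$, asserting that a $\mathbb{Q}$-Gorenstein smoothing of any class $T$ singularity acts topologically as a rational blow-down. This is false for $\frac{1}{dn^2}(1,dna-1)$ with $d \ge 2$: the Milnor fiber then has positive second Betti number, while a rational blow-down glues in a rational homology ball; only class $T_0$ singularities ($d=1$) admit rational homology disk smoothings (Wahl~\cite{Wahl-1981}, Looijenga--Wahl~\cite{Looijenga-Wahl-1986}). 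The paper therefore does not work with $P$-resolutions directly but first passes to the associated $M$-resolution of Behnke--Christophersen~\cite{Behnke-Christophersen-1994}, which carries only $T_0$ singularities and has the same Milnor fiber (Proposition~\ref{proposition:MilnorFiber-via-QBLDN}); this reduction is indispensable and absent from your plan.

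More importantly, your central device --- a relative MMP on a threefold interpolating between the trivial family of $\widetilde{X}$ and the smoothing carrying $W$, together with a ``dictionary'' into symplectic surgeries --- is exactly the part you flag as unproven, and it is not how the paper argues: no such interpolating threefold is set up (the two families are not birational over the disk in any evident way), and the MMP serves the paper only as inspiration. Instead, the flip move is proved purely four-dimensionally (Proposition~\ref{proposition:symplectic-flip}): both $Y$ and $Y^+$ are exhibited as complements of the compactifying divisor in a blow-up of $\mathbb{CP}^2$, and their homology data are matched following Lisca~\cite{Lisca-2008} and Bhupal--Ono~\cite{Bhupal-Ono-2012}, giving symplectic deformation equivalence with no threefold control needed. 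Termination of the scan-and-flip loop is not MMP termination but the initial-curve obstruction: Proposition~\ref{proposition:initial-curve} (with Lemma~\ref{lemma:b1>=a1}) guarantees that initial curves of $T_0$ singularities never become $(-1)$-curves, so the flips stop, and an induction on the number of $T_0$ singularities (Lemma~\ref{lemma:n=2}, Corollary~\ref{corollary:n=2}, Theorem~\ref{theorem:main-cyclic}) finishes the cyclic case. Your sketch also says nothing about the non-cyclic case, which the paper settles via the classification of Han--Jeon--Shin~\cite{Han-Jeon-Shin-2018}: beyond configurations reducible to the cyclic algorithm there are exactly nine non-linear graphs $\Gamma_1,\dotsc,\Gamma_9$, each handled by an explicit antiflip/blow-down sequence. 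In short, each of the obstacles you list at the end is a genuine gap, and the paper closes them by concrete means --- $M$-resolutions, homology-data computations, and the initial-curve lemma --- that your proposal would still need to supply.
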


Here a \emph{symplectic antiflip} is a converse operation to the so-called \emph{symplectic flip} which is a way to convert a rationally blown-down regular neighborhood into a simpler one; See Section~\ref{section:symplectic-flips}.

The difference with Bhupal-Ozbagci~\cite{Bhupal-Ozbagci-2016} or Choi-Park~\cite{Choi-Park-2018} is that it is unavoidable to use symplectic antiflips in our theorem in most cases. Indeed symplectic antiflips tell us clearly where to rationally blow down to get the given symplectic fillings. But the authors believe that a similar operation to symplectic antiflip would be implemented in their results too. We discuss this in Section~\ref{subsection:Examples} with some examples.

\subsection*{Notations and notions}

In the dual graph of a bunch of $\mathbb{CP}^1$'s, we decorate by rectangles $\square$ those curves that are contracted to a singular point. For example, the dual graph
\begin{equation}\label{equation:notation-square}
\begin{tikzpicture}[scale=1]
\node[rectangle] (00) at (0,0) [label=above:{$-2$}] {};
\node[rectangle] (10) at (1,0) [label=above:{$-4$}] {};
\node[rectangle] (20) at (2,0) [label=above:{$-3$}] {};
\node[rectangle] (30) at (3,0) [label=above:{$-3$}] {};
\node[bullet] (40) at (4,0) [label=above:{$-2$},label=below:{$C$}] {};

\draw[-] (00)--(10);
\draw[-] (10)--(20);
\draw[-] (20)--(30);
\draw[-] (30)--(40);
\end{tikzpicture}
\end{equation}
implies that we contract the linear chain of $\mathbb{CP}^1$'s whose dual graph is
\begin{equation*}
\begin{tikzpicture}[scale=1]
\node[bullet] (00) at (0,0) [label=above:{$-2$}] {};
\node[bullet] (10) at (1,0) [label=above:{$-4$}] {};
\node[bullet] (20) at (2,0) [label=above:{$-3$}] {};
\node[bullet] (30) at (3,0) [label=above:{$-3$}] {};

\draw[-] (00)--(10);
\draw[-] (10)--(20);
\draw[-] (20)--(30);
\end{tikzpicture}
\end{equation*}
so that we obtain a singular surface consisting of the $-2$-curve $C$ with a cyclic quotient surface singularity $\frac{1}{50}(1,29)$ on $C$.

We denote for simplicity by \[a_1-a_2-\dotsb-a_n\] a linear chain of $\mathbb{CP}^1$'s (or $2$-spheres) whose dual graph is given as
\begin{equation*}
\begin{tikzpicture}[scale=1]
\node[bullet] (10) at (1,0) [label=above:{$-a_1$}] {};
\node[bullet] (20) at (2,0) [label=above:{$-a_2$}] {};

\node[empty] (250) at (2.5,0) [] {};
\node[empty] (30) at (3,0) [] {};

\node[bullet] (350) at (3.5,0) [label=above:{$-a_n$}] {};

\draw [-] (10)--(20);
\draw [-] (20)--(250);
\draw [dotted] (20)--(350);
\draw [-] (30)--(350);
\end{tikzpicture}
\end{equation*}
with $a_i > 0$. We then enclose the contracted curves by the brackets $[ \quad ]$ instead of rectangles $\square$. For example, $[2, 4, 3, 3]-2$ is equal to the dual graph in Equation~\eqref{equation:notation-square}. Sometimes $[a_1,\dotsc,a_n]$ denote the singularity itself that is obtained by contracting the linear chain $a_1-\dotsb-a_n$.

We also regard dual graphs of symplectic $2$-spheres as regular neighborhoods of the spheres given by plumbing construction. In this case, $2$-spheres decorated by $\square$ or enclosed by brackets $[ \quad ]$ are those $2$-spheres that are rationally blown down. So the above example
\begin{tikzpicture}[scale=0.5]
\node[rectangle] (00) at (0,0) [label=above:{$-2$}] {};
\node[rectangle] (10) at (1,0) [label=above:{$-4$}] {};
\node[rectangle] (20) at (2,0) [label=above:{$-3$}] {};
\node[rectangle] (30) at (3,0) [label=above:{$-3$}] {};
\node[bullet] (40) at (4,0) [label=above:{$-2$}] {};

\draw[-] (00)--(10);
\draw[-] (10)--(20);
\draw[-] (20)--(30);
\draw[-] (30)--(40);
\end{tikzpicture}
or $[2,4,3,3]-2$ denote a regular neighborhood of
\begin{tikzpicture}[scale=0.5]
\node[bullet] (00) at (0,0) [label=above:{$-2$}] {};
\node[bullet] (10) at (1,0) [label=above:{$-4$}] {};
\node[bullet] (20) at (2,0) [label=above:{$-3$}] {};
\node[bullet] (30) at (3,0) [label=above:{$-3$}] {};
\node[bullet] (40) at (4,0) [label=above:{$-2$}] {};

\draw[-] (00)--(10);
\draw[-] (10)--(20);
\draw[-] (20)--(30);
\draw[-] (30)--(40);
\end{tikzpicture}
that is rationally blown down along
\begin{tikzpicture}[scale=0.5]
\node[bullet] (00) at (0,0) [label=above:{$-2$}] {};
\node[bullet] (10) at (1,0) [label=above:{$-4$}] {};
\node[bullet] (20) at (2,0) [label=above:{$-3$}] {};
\node[bullet] (30) at (3,0) [label=above:{$-3$}] {};

\draw[-] (00)--(10);
\draw[-] (10)--(20);
\draw[-] (20)--(30);
\end{tikzpicture}

\subsection*{Acknowledgements}

Heesang Park was supported by Basic Science Research Program through the National Research Foundation of Korea(NRF) funded by the Ministry of Education: NRF-2018R1D1A1B07042134. This paper was written as part of Konkuk University's research support program for its faculty on sabbatical leave in 2019. Dongsoo Shin was supported by Basic Science Research Program through the National Research Foundation of Korea(NRF) funded by the Ministry of Education: NRF-2018R1D1A1B07048385. A part of this work was done when HP and DS were on sabbatical leave at Korea Institute for Advanced Study. They would like to thank KIAS for warm hospitality and financial support.

\section{Symplectic fillings and $P$-resolutions}
\label{section:symplectic-filling-P-resolution}

We recall basics on the correspondence between minimal symplectic fillings and $P$-resolutions of quotient surface singularities.

\subsection{Singularities of class $T$}

We first introduce cyclic quotient surface singularities that admit smoothings whose Milnor fibers are rational homology disk. For details, refer KSB~\cite[\S3]{Kollar-Shepherd-Barron-1988} for example.

Let $(X,0)$ be a normal surface singularity. A \emph{smoothing} $\pi \colon \mathcal{X} \to \mathbb{D}$ over a small disk $0 \in \mathbb{D} (\subset \mathbb{C})$ is a surjective flat morphism such that $\pi^{-1}(0) \cong X$ and a general fiber $X_t := \pi^{-1}(t)$ ($t \neq 0$) is smooth. All general fibers are diffeomphic to each other. So we call a general fiber of a smoothing $\pi \colon (X \subset \mathcal{X}) \to (0 \in \mathbb{D})$ of $X$ the \emph{Milnor fiber} of the smoothing $\pi$. A smoothing $\mathcal{X} \to \Delta$ of $X$ is \emph{$\mathbb{Q}$-Gorenstein} if $K_{\mathcal{X}}$ is $\mathbb{Q}$-Cartier.

\begin{definition}
A normal surface singularity is \emph{of class $T$} if it is a quotient surface singularity, and it admits a $\mathbb{Q}$-Gorenstein one-parameter smoothing.
\end{definition}

\begin{proposition}[{KSB~\cite[Proposition~3.10]{Kollar-Shepherd-Barron-1988}}]
A singularity of class $T$ is a rational double point or a cyclic quotient surface singularity of type $\frac{1}{dn^2}(1, dna-1)$ with $d \ge 1$, $n \ge 2$, $1 \le a < n$, and $(n,a)=1$.
\end{proposition}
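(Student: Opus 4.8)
The plan is to prove the two directions separately, treating the Gorenstein case first and then reducing the general case to an equivariant smoothing problem for a rational double point. A quotient surface singularity $X = \mathbb{C}^2/G$ (with $G \subset GL(2,\mathbb{C})$ finite, acting freely off the origin) is Gorenstein precisely when $G \subset SL(2,\mathbb{C})$, in which case $X$ is a rational double point; these are hypersurface singularities, so they admit smoothings, which are automatically $\mathbb{Q}$-Gorenstein because $K$ is Cartier. This disposes of the rational double points, so I may assume $X$ is non-Gorenstein of class $T$ and must show it is cyclic of the stated type.

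The main device is the index-one (canonical) cover. Writing $r$ for the index of $X$ and $N = G \cap SL(2,\mathbb{C})$, the determinant gives $G/N \cong \mathbb{Z}/r$, so $Y := \mathbb{C}^2/N \to X$ is a Galois $\mathbb{Z}/r$-cover, \'etale in codimension one, with $Y$ a rational double point (or smooth). First I would show that a $\mathbb{Q}$-Gorenstein one-parameter smoothing $\mathcal{X} \to \mathbb{D}$ lifts to a $\mathbb{Z}/r$-equivariant smoothing $\mathcal{Y} \to \mathbb{D}$ of $Y$ with $\mathcal{Y}/(\mathbb{Z}/r) = \mathcal{X}$: since $rK_{\mathcal{X}}$ is Cartier one forms the index-one cover of the total space $\mathcal{X}$, and because the special fibre $X = \{t=0\}$ is a Cartier divisor with only log terminal singularities, $(\mathcal{X},X)$ is purely log terminal and hence $\mathcal{X}$ is terminal. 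As $\mathcal{Y} \to \mathcal{X}$ is quasi-\'etale, $\mathcal{Y}$ is a Gorenstein terminal threefold, i.e.\ a compound Du Val ($cDV$) singularity, realised as a smoothing of the rational double point $Y$.

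The heart of the argument is to determine which equivariant data can occur. Because $\mathcal{X} = \mathcal{Y}/(\mathbb{Z}/r)$ is a terminal cyclic quotient of a $cDV$ point, the terminal lemma forces the linearised $\mathbb{Z}/r$-action to be of type $\frac{1}{r}(1,-1,b)$ on suitable ambient coordinates; feeding this constraint back through the equation of $\mathcal{Y}$ shows that $Y$ must be of type $A$ (the dihedral and exceptional points $D_k, E_6, E_7, E_8$ admit no invariant smoothing direction giving a terminal cyclic quotient), so that $G$ is abelian and $X$ is cyclic. Writing $Y = A_{m-1} = \{xy = z^{m}\}$, the equivariant smoothing is the $cA$ family $\mathcal{Y} = \{xy = z^{m} + t\}$, and computing the induced action on $X = \{t=0\}/(\mathbb{Z}/r)$ together with the numerical relations $m = dn$ and $r = n$ yields $X \cong \frac{1}{dn^2}(1, dna-1)$, with the constraints $d \ge 1$, $n \ge 2$, $1 \le a < n$ and $(n,a) = 1$ dropping out of the requirements that the action be free off the origin and that $X$ be non-Gorenstein.

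The step I expect to be the main obstacle is the exclusion of the non-cyclic ($D$ and $E$) covers together with the precise bookkeeping of the weights: one must verify carefully, via the terminal lemma and the explicit defining equation, that no $\mathbb{Z}/r$-invariant smoothing component of a $D_k$ or $E_k$ point produces a terminal quotient, and then extract the exact arithmetic type $\frac{1}{dn^2}(1,dna-1)$ from the $cA/r$ weights. Finally, for the converse inclusion one exhibits the explicit model $\mathcal{Y} = \{xy = z^{dn} + t\}$ with the $\mathbb{Z}/n$-action of type $\frac{1}{n}(1,-1,a)$, whose quotient is a $\mathbb{Q}$-Gorenstein smoothing of $\frac{1}{dn^2}(1,dna-1)$, confirming that every singularity in the list is indeed of class $T$.
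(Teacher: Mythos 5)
This proposition is quoted in the paper from KSB \cite[Proposition~3.10]{Kollar-Shepherd-Barron-1988} without any proof, so there is no in-paper argument to compare against; the relevant benchmark is KSB's original proof, and your sketch reconstructs essentially that argument: pass to the index-one cover of the total space $\mathcal{X}$ of a one-parameter $\mathbb{Q}$-Gorenstein smoothing, show $\mathcal{X}$ is terminal, conclude the cover $\mathcal{Y}$ is an isolated compound Du Val point smoothing the canonical cover $Y=\mathbb{C}^2/(G\cap SL(2,\mathbb{C}))$ of $X$, rule out the $D$ and $E$ covers, and identify the surviving case with the quotient of $\{xy=z^{dn}+t\}$ by $\frac{1}{n}(1,-1,a)$, which also gives the converse. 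Two small caveats. First, your route to terminality of $\mathcal{X}$ via plt-ness of $(\mathcal{X},X)$ uses inversion of adjunction, which postdates KSB (they establish terminality of the total space by a direct argument); this is a legitimate modernization, not a gap, but you should cite it as an input rather than treat it as formal. Second, the assertion that ``the terminal lemma forces the linearised $\mathbb{Z}/r$-action to be of type $\frac{1}{r}(1,-1,b)$'' is an overstatement as written: Mori's classification of index-$r$ terminal threefold points yields $cA/r$ with those weights only for $r\ge 5$, while for $r\in\{2,3,4\}$ the list also contains the exceptional families $cAx/2$, $cAx/4$, $cD/2$, $cD/3$, $cE/2$, and these must be excluded using the specific constraints of the smoothing situation (the semi-invariant parameter $t$, freeness of the $\mathbb{Z}/r$-action on the nearby Milnor fiber, and the requirement that the invariant hyperplane section $\{t=0\}$ be a log terminal surface singularity). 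You flag this exclusion as the main obstacle, which is the right instinct, but the proof is not complete until it is carried out; with that step filled in, the outline is correct and is the standard KSB argument.
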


A \emph{singularity of class $T_0$} is defined by a cyclic quotient surface singularity of type $\frac{1}{n^2}(1, na-1)$ with $n > a \ge 1$ and $(n,a)=1$. Any singularity of class $T_0$ admits a smoothing whose Milnor fiber $M$ is a rational homology disk, that is, $H^i(M;\mathbb{Q})=0$ for $i \ge 1$. Furthermore, according to Wahl~\cite[Example~5.9.1]{Wahl-1981} and Looijenga–Wahl~\cite[Remark~5.10]{Looijenga-Wahl-1986}, singularities of class $T_0$ are the only cyclic quotient singularities having a rational homology disk smoothing.

So the one-parameter $\mathbb{Q}$-Gorenstein smoothing of a singularity of class $T_0$ may be interpreted topologically as a rational blow-down surgery along its minimal resolution defined in Fintushel--Stern~\cite{Fintushel-Stern-1997} and J.~Park~\cite{JPark-1997}, which is called as the \emph{rational blow-down along a singularity of class $T_0$}. Notice that a rational blow-down surgery along a singularity of class $T_0$ is a symplectic surgery by Symington~\cite{Symington-1988}, \cite{Symington-2001}. See also Park-Stipsicz~\cite{Park-Stipsicz 2014}.

Any singularities of class $T_0$ can be obtained by the following iterations (KSB~\cite[Proposition~3.11]{Kollar-Shepherd-Barron-1988}): At first, $[4]$ is a singularity of class $T_0$. If the singularity $[b_1, \dotsc, b_r]$ is of class $T_0$, then so are $[2, b_1, \dotsc, b_{r-1}, b_r+1]$ and $[b_1+1, b_2, \dotsc, b_r, 2]$. Then every singularity of class $T_0$ can be obtained by starting with $[4]$ and iterating the above steps.

\begin{definition}
Let $[e_1,\dotsc,e_n]$ be a singularity of class $T_0$ and let $e_i$ be the image of $[4]$ under the above procedure. The corresponding exceptional curve $E_i$ is called the \emph{initial curve} of the singularity.
\end{definition}

For example, in $[6,2,2]$, $E_1$ is the initial curve. On the other hand, in $[2,5,3]$, $E_2$ is the initial one.

\subsection{$P$-resolutions and $M$-resolutions}

Let $(X,0)$ be a quotient surface singularity. There are one-to-one correspondence between Milnor fibers and certain partial resolutions of $X$.

\begin{definition}[{KSB~\cite[Definition~3.8]{Kollar-Shepherd-Barron-1988}}]
A partial resolution $f \colon Y \to X$ is called a \emph{$P$-resolution} of $X$ if $Y$ has only singularities of class $T$, and $K_Y$ is ample relative to $f$.
\end{definition}

\begin{remark}
Every $P$-resolution of $X$ is dominated by the so-called maximal resolution of $X$, which can be obtained by blowing up the minimal resolution of $X$; KSB~\cite[Lemma~3.14]{Kollar-Shepherd-Barron-1988}.
\end{remark}

According to KSB~\cite[Theorem~3.9]{Kollar-Shepherd-Barron-1988}, each smoothing $\pi \colon \mathcal{X} \to \mathbb{D}$ of $X$ is induced by a $\mathbb{Q}$-Gorenstein smoothing $\phi \colon \mathcal{Y} \to \mathbb{D}$ of a $P$-resolution $Y$ of $X$. That is, there is a birational morphism $\mathcal{Y} \to \mathcal{X}$ over $\mathbb{D}$ such that the Milnor fiber of the smoothing $\pi$ of $X$ is isomorphic to a general fiber $Y_t = \phi^{-1}(t)$ ($t \neq 0$) of the smoothing $\phi$ of $Y$.

Furthermore one may restrict the types of singularities on $P$-resolutions.

\begin{definition}[{Behnke--Christophersen~\cite[p.882]{Behnke-Christophersen-1994}}]
\label{definition:$M$-resolution}
An \emph{$M$-resolution} of a quotient surface singularity $(X,0)$ is a partial resolution $f \colon Z \to X$ such that $Z$ has only singularities of class $T_0$ as its singularities and $K_Z$ is nef relative to $f$, i.e., $K_Z \cdot E \ge 0$ for all $f$-exceptional curves $E$.
\end{definition}

\begin{remark}
There is an easy way to convert a given $P$-resolution of a quotient surface singularity $X$ to the corresponding $M$-resolution by blowing up the minimal resolution of the $P$-resolution and contracting certain parts of it; cf. PPSU~\cite[\S6]{PPSU-2018}. Hence the minimal resolution of a $M$-resolution can be obtained by blowing up the minimal resolution of $X$. For example, for the $P$-resolution $[2,4,3,3]-2$, its corresponding $M$-resolution is given by $[2,5,3]-1-[2,5,3]-2$.
\end{remark}

Behnke--Christophersen~\cite[3.1.4, 3.3.2, 3.4]{Behnke-Christophersen-1994} also prove a similar result to the above KSB~\cite[Theorem~3.9]{Kollar-Shepherd-Barron-1988}: Every Milnor fiber of a smoothing of a quotient surface singularity $X$ is isomorphic to a general fiber of the $\mathbb{Q}$-Gorenstein smoothing of the corresponding $M$-resolution of $X$.

Since the Milnor fiber of a rational double point is symplectomorphic to its minimal resolution, the above result of Behnke--Christophersen~\cite{Behnke-Christophersen-1994} implies that:

\begin{proposition}\label{proposition:MilnorFiber-via-QBLDN}
Every Milnor fiber of a quotient surface singularity $X$ is symplectomorphic to the symplectic $4$-manifold that is obtained by rationally blowing down along singularities of class $T_0$ on the corresponding $M$-resolution of $X$.
\end{proposition}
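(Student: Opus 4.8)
The plan is to read the statement off the Behnke--Christophersen presentation of the Milnor fiber, upgrading the analytic isomorphism there to a symplectomorphism by interpreting the $\mathbb{Q}$-Gorenstein smoothing of a class $T_0$ singularity as a symplectic rational blow-down. First I would fix, by Behnke--Christophersen~\cite{Behnke-Christophersen-1994}, a $\mathbb{Q}$-Gorenstein smoothing $\phi\colon\mathcal{Y}\to\mathbb{D}$ of the corresponding $M$-resolution $Z$ of $X$ whose general fiber $Y_t$ is isomorphic to the given Milnor fiber, and equip $\mathcal{Y}$ with a K\"ahler structure so that $Y_t$ carries the symplectic structure of the Milnor fiber. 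Because $Z$ is an $M$-resolution, its singular locus is exactly a finite set of class $T_0$ points $p_1,\dots,p_k$, and over the smooth locus $Z$ is canonically identified with the minimal resolution $\widetilde{Z}$ with the plumbed chains of the $p_i$ deleted.

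Next I would localize the smoothing. Over disjoint Milnor balls $U_i\ni p_i$ the restriction of $\phi$ is the $\mathbb{Q}$-Gorenstein smoothing of the class $T_0$ singularity at $p_i$, whose general fiber is a rational homology ball $B_i$; over the complement $\mathcal{Y}\setminus\bigcup_i U_i$ the map $\phi$ is a proper K\"ahler submersion, so an Ehresmann--Moser argument identifies the corresponding part of $Y_t$ symplectically with $Z\setminus\bigcup_i U_i$, i.e. with $\widetilde{Z}$ minus the class $T_0$ chains. Reassembling the two pieces shows that $Y_t$ is obtained from $\widetilde{Z}$ by excising the plumbing neighborhoods of the class $T_0$ chains and gluing in the balls $B_i$ --- which is precisely the rational blow-down of $\widetilde{Z}$ along the class $T_0$ singularities.

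To make the reassembly symplectic I would appeal to Symington~\cite{Symington-1988, Symington-2001}, whose results show that the contact structure induced on each link $\partial U_i$ by the K\"ahler form on $\widetilde{Z}$ is the one filled by $B_i$, so that the rational blow-down is a symplectic operation and the glued manifold is symplectomorphic to $Y_t$. The remark that the Milnor fiber of a rational double point is symplectomorphic to its minimal resolution enters as the base case of this identification: it is the degenerate instance with no class $T_0$ points, where the smoothing merely resolves and the Milnor fiber is symplectically the resolution itself; together with the class $T_0$ blow-downs it guarantees that the part of $Y_t$ over the smooth locus contributes exactly the resolution, so that the local symplectomorphisms assemble into a global one.

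The hard part will be the symplectic matching along the links $\partial U_i$: one must verify that the contact structure on the link of each class $T_0$ singularity coming from the ambient resolution $\widetilde{Z}$ is contactomorphic to the one bounding the rational homology ball $B_i$, and that the symplectic forms interpolate across the gluing collars without monodromy. This is exactly the content of Symington's symplectic rational blow-down, so once it is invoked the remaining ingredients --- the Ehresmann--Moser identification over the smooth locus and the bookkeeping that the excised chains are the minimal resolutions of the $p_i$ --- are routine.
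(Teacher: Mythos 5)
Your proposal is correct and follows essentially the same route as the paper: the paper proves this proposition simply by combining the Behnke--Christophersen result (Milnor fiber $=$ general fiber of the $\mathbb{Q}$-Gorenstein smoothing of the $M$-resolution) with the previously noted facts that the $\mathbb{Q}$-Gorenstein smoothing of a class $T_0$ singularity is Symington's symplectic rational blow-down and that the Milnor fiber of a rational double point is symplectomorphic to its minimal resolution. Your Milnor-ball localization and Ehresmann--Moser identification merely unpack, in standard fashion, what the paper leaves implicit in that citation.
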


We will end this subsection by discussing some simple necessary conditions for being $M$-resolutions. At first, every $(-1)$-curve on $Z$ (if any) should pass through at least two singularities of class $T_0$ on $Z$ because $K_Z$ is nef relative to $f$.

\begin{proposition}\label{proposition:reduction}
If $[a_1+1,a_2,\dotsc,a_r,2]-1-[b_1,\dotsc,b_t,2]$ is an $M$-resolution, then so is $[a_1,\dotsc,a_r]-1-[b_1,\dotsc,b_t]$.
\end{proposition}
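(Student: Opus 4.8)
The plan is to verify, for the reduced configuration, the two defining properties of an $M$-resolution: that its two contracted chains are singularities of class $T_0$, and that its relative canonical divisor is nef. Since the only curve that is not contracted to a singular point is the central $(-1)$-curve $E$, the nef requirement amounts to the single inequality $K\cdot E\ge 0$, so the whole proposition splits into a combinatorial claim about the chains and one inequality about $E$.

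The class $T_0$ claim I would dispose of combinatorially. Each bracketed chain of the given configuration has length at least two and ends in a $(-2)$-curve. Of the two iterations recalled above, only $[c_1,\dotsc,c_s]\mapsto[c_1+1,c_2,\dotsc,c_s,2]$ can produce a terminal $(-2)$-entry (the other produces a terminal entry $\ge 3$, and the base $[4]$ has length one), so any class $T_0$ singularity of this shape is the image under that iteration of a uniquely determined class $T_0$ singularity, namely the corresponding chain of the reduced configuration obtained by deleting the terminal $2$ and lowering the first entry by one. Hence both reduced chains are again of class $T_0$, with no computation.

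The substance is the nef inequality, and this is the step I expect to be the main obstacle. Passing to the minimal resolution and writing $d',d''$ for the discrepancies of the two exceptional curves that $E$ meets, the projection formula gives $K\cdot E=-1-d'-d''$, so nefness is the inequality $(1+d')+(1+d'')\le 1$ on the two boundary log discrepancies. The key computation I would isolate is the closed form: for a class $T_0$ singularity $\tfrac{1}{n^{2}}(1,na-1)$ the log discrepancy of the first curve of its resolution chain is $a/n$ and that of the last curve is $(n-a)/n$, obtained by solving the linear discrepancy system against the tridiagonal intersection matrix of the chain. I would then record that the iteration above sends the parameter $(n,a)$ to $(n+a,a)$. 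Substituting, the nef condition for the enlarged configuration becomes $\beta/n_2\le\alpha/n_1$, where $(n_1,\alpha)$ and $(n_2,\beta)$ are the parameters of the two enlarged singularities, while the nef condition for the reduced configuration becomes $\beta/(n_2-\beta)\le\alpha/(n_1-\alpha)$.

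To finish I would observe that these two inequalities are equivalent. Setting $u=\alpha/n_1$ and $v=\beta/n_2$ in $(0,1)$, the first reads $v\le u$ and the second reads $v/(1-v)\le u/(1-u)$, and since $t\mapsto t/(1-t)$ is strictly increasing on $(0,1)$ the two conditions coincide. Thus the enlarged configuration is nef exactly when the reduced one is, and in particular the $M$-resolution hypothesis on the former transfers to the latter. Essentially all the difficulty is concentrated in the middle step—extracting the value $a/n$ for the boundary log discrepancy and tracking it through the parameter change $(n,a)\mapsto(n+a,a)$—after which the monotonicity of $t/(1-t)$ renders the comparison of the two nef conditions automatic.
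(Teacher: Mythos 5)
Your strategy---reducing everything to the single inequality $K\cdot E\ge 0$ at the $(-1)$-curve, computing it via the log discrepancies of the end curves of the two $T_0$-chains, and comparing the two nef conditions through the parameter change---is sound, and your numerical claims all check out: for $\frac{1}{n^2}(1,na-1)$ the log discrepancies of the first and last exceptional curves are indeed $a/n$ and $(n-a)/n$, the iteration $[c_1,\dotsc,c_s]\mapsto[c_1+1,c_2,\dotsc,c_s,2]$ does send $(n,a)$ to $(n+a,a)$, and the monotonicity of $t/(1-t)$ makes your two inequalities equivalent. For comparison: the paper gives no argument at all here, deferring to the proof of Lemma~9.2 in PPSU~\cite{PPSU-2018}, whose computation (testing $K\cdot E\ge 0$ against the discrepancies of the $T_0$-chains adjacent to the $(-1)$-curve) has the same core as yours; your packaging via the closed-form end log discrepancies and the substitution $u=\alpha/n_1$, $v=\beta/n_2$ is a clean, self-contained version of it.

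There is, however, one genuine mismatch you pass over silently. Your class-$T_0$ step shows that the chains obtained by \emph{deleting the terminal $2$ and lowering the first entry} are of class $T_0$, namely $[a_1,\dotsc,a_r]$ and $[b_1-1,b_2,\dotsc,b_t]$; but the conclusion as printed asserts this for $[b_1,\dotsc,b_t]$, whose first entry is \emph{not} lowered, and that is not available---indeed not true in general ($[5,2]$ is of class $T_0$ while $[5]$ is not). In fact the proposition as literally printed is false: $[5,2]-1-[5,2]$ is an $M$-resolution of $\frac{1}{18}(1,5)$ (both points are $T_0$ and $K\cdot E=-1+\frac{1}{3}+\frac{2}{3}=0$), yet the printed reduction would output $[4]-1-[5]$, which is not an $M$-resolution. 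What your argument actually establishes is the corrected statement, with hypothesis $[a_1+1,a_2,\dotsc,a_r,2]-1-[b_1+1,b_2,\dotsc,b_t,2]$, for which the two nef conditions coincide exactly (both read $\beta n_1\le\alpha n_2$ in the reduced parameters); this is evidently what is intended and what the application in Lemma~\ref{lemma:b1>=a1} requires, but a correct solution must flag the discrepancy rather than silently substitute the fixed version, since as written your sentence ``namely the corresponding chain of the reduced configuration'' is false for the right-hand chain. Two smaller points: the closed form $a/n$, $(n-a)/n$ is asserted rather than derived (an induction along the very iteration you use would fit the argument well), and ``is an $M$-resolution'' also presupposes that the reduced configuration is contractible, i.e.\ negative definite; this does follow---with room to spare---from your inequality and the standard criterion for gluing two negative-definite chains along a $(-1)$-curve, but it deserves a sentence.
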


\begin{proof}
See the proof of Lemma 9.2 in PPSU~\cite{PPSU-2018}.
\end{proof}

\begin{lemma}\label{lemma:b1>=a1}
Suppose that $[a_1, \dotsc, a_r]-1-[b_1,\dotsc,b_s]$ is a part of an $M$-resolution $Z$ of a quotient surface singularity $X$. Then $b_1 \ge a_1$.
\end{lemma}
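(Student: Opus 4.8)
The plan is to extract the single geometric constraint that the $M$-resolution condition imposes on this configuration and then convert it into the numerical inequality $b_1 \ge a_1$. Working on the minimal resolution $\pi\colon \widetilde{Z} \to Z$, the indicated part corresponds to a linear chain whose middle vertex is a $(-1)$-curve $C$ meeting the last exceptional curve of the chain that contracts to $[a_1,\dots,a_r]$ and the first exceptional curve of the chain that contracts to $[b_1,\dots,b_s]$. Since $Z$ is an $M$-resolution, $K_Z$ is nef relative to $f$, so in particular $K_Z \cdot C \ge 0$; this is the only local input I will use.

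First I would compute $K_Z \cdot C$ by pulling back to $\widetilde{Z}$. Writing $K_{\widetilde Z} = \pi^* K_Z + \sum_i d_i E_i$, where the $d_i$ are the (negative) discrepancies of the exceptional curves over the two class-$T_0$ points, and using $K_{\widetilde Z}\cdot C = -2 - C^2 = -1$ together with $C \cdot E_i = 1$ for exactly the two curves that $C$ meets, one gets $K_Z \cdot C = -1 - d_A - d_B$, where $d_A, d_B$ are the discrepancies of those two attaching curves. Hence $K_Z\cdot C \ge 0$ is equivalent to $d_A + d_B \le -1$. The next step is to make $d_A$ and $d_B$ explicit: for a singularity of class $T_0$, say $\frac{1}{n^2}(1,na-1)$ with resolution chain $[c_1,\dots,c_k]$, solving the linear system $\sum_j d_j (E_i\cdot E_j) = c_i - 2$ shows that its two end curves have discrepancies $-(n-a)/n$ and $-a/n$. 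Feeding this in turns $d_A + d_B \le -1$ into a clean inequality $\alpha_A/n_A \ge \beta_B/n_B$ between the normalized invariants of the two singularities.

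It then remains to convert this inequality between discrepancies into the asserted inequality between the leading continued-fraction entries $a_1 = \lceil n_A^2/(n_A\alpha_A-1)\rceil$ and $b_1 = \lceil n_B^2/(n_B\beta_B-1)\rceil$. The case $a_1 = 2$ is immediate, since every entry of a resolution chain is at least $2$, so $b_1 \ge 2 = a_1$. The remaining case $a_1 \ge 3$ is where the real work lies: here the generation rules for class-$T_0$ singularities force the $C$-side of the $[a_1,\dots,a_r]$ chain to terminate in a $(-2)$-curve (the only exception being $[4]$), so the configuration is set up exactly for the reduction of Proposition~\ref{proposition:reduction}. I would therefore argue by induction on the total length $r+s$ of the two chains, applying Proposition~\ref{proposition:reduction} to pass to a strictly smaller $M$-resolution and tracking how $a_1$ and $b_1$ change, with the $[4]$ case and the short base cases checked directly.

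The main obstacle I expect is precisely this last conversion step. The discrepancy inequality $\alpha_A/n_A \ge \beta_B/n_B$ only controls the ratios, whereas $a_1$ and $b_1$ depend on $n$ and $a$ separately (and jump in the degenerate regime $a=1$, where $a_1 = n+2$); so the deduction is genuinely sensitive to how the two chains sit inside $Z$, and one must use the structural information coming from the $T_0$ generation rather than the ratio inequality in isolation. Keeping the induction compatible with this orientation — making sure the reduction of Proposition~\ref{proposition:reduction} can always be applied on the correct side, and that the inequality is genuinely preserved (rather than lost by one) at each reduction — is the delicate point the argument has to handle with care.
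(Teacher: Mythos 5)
Your plan rests on the premise that relative nefness of $K_Z$ at the connecting $(-1)$-curve $C$ --- the inequality $d_A + d_B \le -1$ you correctly derive --- is ``the only local input'' needed. That premise is false: the inequality is genuinely too weak to force $b_1 \ge a_1$, and no conversion step can repair this. Concretely, take $A=[4]$ (end discrepancy $-1/2$) and $B=[3,5,2]$, a class-$T_0$ chain of type $\frac{1}{25}(1,9)$ whose left-end discrepancy is $-1+2/5=-3/5$; then $d_A+d_B=-11/10\le -1$, so $K_Z\cdot C = 1/10 > 0$, yet $b_1=3<4=a_1$. So the ratio inequality you extract from nefness is satisfiable with $b_1<a_1$, and the obstacle you flag at the end of your proposal is not merely delicate but fatal to the stated strategy. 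The paper's main case runs on a different engine, one your plan explicitly renounces: since $Z\to X$ is a partial resolution, $\widetilde Z$ dominates the minimal resolution $\widetilde X$. When $a_1\ge 3$ and $[a_1,\dotsc,a_r]\neq[a_1,2,\dotsc,2]$, the $T_0$ generation rules force exactly $a_1-2$ trailing $2$'s, so successively contracting $(-1)$-curves turns $a_1-\dotsb-a_r-1-b_1-\dotsb-b_s$ into $a_1-\dotsb-(a_i-1)-(b_1-a_1+1)-b_2-\dotsb-b_s$; this intermediate surface still blows down to $\widetilde X$, so every entry must remain positive, giving $b_1-a_1+1\ge 1$ immediately. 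Your proposal never identifies this contraction/negativity mechanism, which is where the inequality actually comes from.

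Your proposed rescue --- induction on $r+s$ via Proposition~\ref{proposition:reduction} --- also cannot carry the weight you assign it. That proposition requires both chains to end in a $(-2)$ in the appropriate positions (in particular $b_t=2$ at the far end of the right chain), and nothing in the configuration guarantees this, so the induction step has no move in general; moreover each application strips the trailing $2$ adjacent to $C$ on the left, which is exactly the regime $[a_1,2,\dotsc,2]$, not the generic one. In the paper the roles are inverted relative to your plan: the blow-down bound comes first, and only in the residual case $[a_1,\dotsc,a_r]=[a_1,2,\dotsc,2]$ does it merely yield $b_1-a_1+3\ge 1$, which is then used to produce the trailing $2$'s in the $b$-chain needed to apply Proposition~\ref{proposition:reduction} repeatedly, reducing to $[4]-1-[\cdots]$, where the nef/discrepancy computation (your step one) finally enters as the last ingredient. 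So your architecture promotes the paper's final local computation to the main engine, where the counter-instance above shows it provably fails, and defers --- by your own admission (``the real work lies,'' ``the delicate point'') --- precisely the conversion that constitutes the proof. As written, this is a plan with its central step missing, not a proof.
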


\begin{proof}
If $a_1=2$, then the assertion is obvious. Assume that $a_1 \ge 3$.

If $[a_1,\dotsc,a_r] \neq [a_1,2,\dotsc,2]$, then for $i=r-a_1+2$, $a_i \ge 3$ and $a_{i+1}=\dotsb=a_r=2$. Contracting $(-1)$-curves successively on the minimal resolution $\widetilde{Z}$ of $Z$, we get a linear chain
\[a_1-a_2-\dotsb-(a_i-1)-(b_1-a_1+1)-b_2-\dotsb-b_r.\]
The above linear chain can be obtained by blowing up the minimal resolution $\widetilde{X}$ of $X$. So $-(b_1-a_1+1)$ cannot be nonnegative. Therefore $b_1 \ge a_1$.

If $[a_1,\dotsc,a_r] = [a_1,2,\dotsc,2]$, then $a_1 \ge 4$, $r=a_1-3$, and $a_2=\dotsb=a_r=2$. We then get a linear chain
\[(a_1-1)-(b_1-a_1+3)-b_2-\dotsb-b_r\]
after contracting $(-1)$-curves on $\widetilde{Z}$. Then $b_1-a_1+3 \ge 1$. Therefore there are at least $(a_1-4)$ 2's in the end of the sequence $b_1,\dotsc,b_t$; that is, $b_{j+1}=\dotsb=b_t=2$ for $j=t-a_1+4$.

Applying Proposition~\ref{proposition:reduction} repeatedly to  $[a_1,2, \dotsc, 2]-1-[b_1,\dotsc,b_s]$, we the have an $M$-resolution containing
\[[4]-1-[b_1-c_1+3,d_2,\dotsc,d_j].\]
Then $b_1-c_1+3 \ge 3$ because $K_Z$ is nef. For this, see the last paragraph at p.1211 in the proof of Lemma 9.2 in PPSU~\cite{PPSU-2018}.
\end{proof}

\begin{proposition}[{PPSU~\cite[Lemma~9.3]{PPSU-2018}}]\label{proposition:initial-curve}
Let $Z=[a_1,\dotsc,a_r]-1-[b_1,\dotsc,b_s]$ be an $M$-resolution of a cyclic quotient surface singularity $X$. Let $\widetilde{Z} \to Z$ be the minimal resolution of the singularities of class $T_0$ and let $\widetilde{X} \to X$ be the minimal resolution of $X$. Finally, let $f \colon \widetilde{Z} \to \widetilde{X}$ be the corresponding induced map. Then $f$ does not contract the initial curves of the singularities.
\end{proposition}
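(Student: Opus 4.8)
The plan is to track the self-intersection number of each initial curve throughout the contraction $f$, which factors as a sequence of blow-downs $\widetilde{Z}=W_0\to W_1\to\dots\to W_N=\widetilde{X}$, each contracting a $(-1)$-curve. By the left--right symmetry of the chain it suffices to treat the initial curve $E$ of the left singularity $[a_1,\ldots,a_r]$. Since $E$ descends from the seed $[4]$ of the class $T_0$ construction and that entry is only ever incremented, its self-intersection in $\widetilde{Z}$ is $-e$ with $e\ge 4$, equivalently $K_{\widetilde{Z}}\cdot E=e-2\ge 2$ by adjunction. A curve is contracted by $f$ exactly when its self-intersection is raised to $-1$ at some intermediate stage, and a blow-down raises the self-intersection of a curve by $1$ for each adjacent curve it contracts; hence $E$ is contracted only if at least $e-1\ge 3$ of its successive neighbours are contracted before it. So the goal is to show that the self-intersection of $E$ never exceeds $-2$.

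First I would analyse $f$ as a cascade started at the unique $(-1)$-curve of $\widetilde{Z}$, the middle curve, the bracketed parts being minimal resolutions whose entries are all $\ge 2$. Contracting the middle curve raises its two neighbours $a_r$ and $b_1$; a new $(-1)$-curve is created on the left precisely when a trailing run $a_{j+1}=\dots=a_r=2$ is swallowed, whereupon the cascade propagates leftward through this run and stops at the first entry $a_j\ge 3$, raising it to $a_j-1\ge 2$. Because $E$ has self-intersection $\le -4$ it sits at or to the left of this first non-$(-2)$ entry, so a \emph{single} passage of the cascade raises $E$ at most once, to $-(e-1)\le -3$, and cannot contract it. The same bookkeeping on the other side shows that every step of the leftward cascade also lowers $b_1$, so that after a run of length $\ell$ is consumed $b_1$ has dropped by $\ell+1$; here Lemma~\ref{lemma:b1>=a1}, via $b_1\ge a_1$, keeps the remaining configuration admissible.

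The essential difficulty, and the step I expect to be the main obstacle, is that the cascade need not pass $E$ only once: contracting a $(-1)$-curve raises the curves on \emph{both} sides, so a cascade that has entered the right singularity can, after reducing $b_1$ to a $(-1)$-curve, bounce back and re-enter the left chain and then threaten $E$ on a second or third pass. Controlling this interaction between the two singularities is exactly where the $M$-resolution hypothesis enters. I would organise this as an induction on $r+s$ using Proposition~\ref{proposition:reduction} together with its symmetric variant: the reduction strips the outermost curve produced by the last step of the class $T_0$ construction on each bracket (and lowers the first entry on the left), so the seed $[4]$—and with it the initial curve—is preserved, and the contraction map for the smaller $M$-resolution agrees with $f$ away from the stripped curves; thus if the reduced $f$ fixes the corresponding initial curve then so does $f$, the base case $[4]-1-[4]$ contracting only the middle curve. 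The genuine control comes from the nef condition on $K_Z$: adjunction along the middle curve $\widetilde{C}_0$ gives $K_Z\cdot C_0=-1-\delta_{a_r}-\delta_{b_1}$, so nefness is the inequality $\delta_{a_r}+\delta_{b_1}\le -1$ on the discrepancies of the two end curves abutting $\widetilde{C}_0$, which is what forces $b_1\ge a_1$ and, after the outer layers are peeled, denies the cascade enough ``energy'' to reach the seed. The portion requiring the most care is verifying that Proposition~\ref{proposition:reduction} (or its mirror) always applies until a base configuration is reached—since a class $T_0$ chain may terminate in a $2$ at either end—and that the initial curve is correctly matched across each reduction.
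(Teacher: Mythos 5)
Your framing is correct as far as it goes: the initial curve has self-intersection $-e$ with $e\ge 4$, it can only be contracted if its self-intersection is driven up to $-1$ by neighbouring blow-downs, and your bookkeeping for a single leftward pass of the cascade (eating the middle $(-1)$-curve plus a trailing run of $\ell$ twos, dropping $b_1$ by $\ell+1$ and the last non-$2$ entry by one) is accurate, as is the adjunction computation $K_Z\cdot C_0=-1-d_{a_r}-d_{b_1}$. But note that the paper does not prove Proposition~\ref{proposition:initial-curve} at all — it imports it verbatim from PPSU~\cite[Lemma~9.3]{PPSU-2018} — so your attempt must stand as a self-contained argument, and it does not. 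The clause carrying all the weight, that ``the contraction map for the smaller $M$-resolution agrees with $f$ away from the stripped curves,'' \emph{is} the inductive step, and it is asserted rather than proved; it is not even literally true in the naive sense, since after the first two blow-downs $Z=[a_1+1,a_2,\dotsc,a_r,2]-1-[b_1,\dotsc,b_t,2]$ becomes the chain $(a_1+1)-a_2-\dotsb-(a_r-1)-(b_1-2)-b_2-\dotsb$, which is not the chain the reduced configuration starts from, so matching the two cascades — in particular across the bounces you yourself identify as the crux — requires a genuine argument you have not supplied. Likewise, nefness by itself only reproduces the inequality of Lemma~\ref{lemma:b1>=a1}; ``denies the cascade enough energy'' is a metaphor standing in for the missing multi-pass estimate.

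There is also a concrete defect in the machine you propose to run. Taking Proposition~\ref{proposition:reduction} as literally stated (and your parenthetical ``lowers the first entry on the left'' shows you do), the very first reduction applied to the paper's own example $[5,2]-1-[6,2,2]$ outputs $[4]-1-[6,2]$, and $[6,2]$ is not of class $T_0$, so the ``reduced $M$-resolution'' is not an $M$-resolution and the inductive hypothesis is vacuous; the usable reduction must undo the last class-$T_0$ construction step on \emph{both} brackets, i.e.\ also decrement $b_1$, giving $[4]-1-[5,2]$ here. But then the first entry of the right bracket — which may itself be the initial curve, as it is for $[6,2,2]$ — changes under reduction, so your claim that the seed ``is preserved'' requires exactly the matching you flag as delicate and never carry out. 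Finally, your base case is too small: neither the reduction nor its mirror applies to $[2,5]-1-[5,2]$ or to $[4]-1-[6,2,2]$, both legitimate $M$-resolutions, so the terminal configurations are not just $[4]-1-[4]$. These stuck cases happen to be exactly those with $a_r\ge 3$ and $b_1\ge 3$, where contracting the middle curve creates no new $(-1)$-curve and the statement is trivial — and one can check that $a_r=2$ forces the right bracket to end in a $2$ (no class-$T_0$ chain of length $\ge 2$ has a $2$ at both ends, and then $b_1\ge a_1\ge 3$ forces a trailing run on the right), so a corrected two-sided reduction would always apply there, with the mirror covering $b_1=2$. With that trichotomy plus a proved reduction/contraction compatibility your plan could likely be completed, but as written it records the easy first pass and defers the actual content of the lemma to steps it names but does not do.
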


\subsection{Symplectic fillings as Milnor fibers}

Any minimal symplectic filling of a quotient surface singularity $(X,0)$ can be realized as the Milnor fiber of a smoothing $\pi \colon (X \subset \mathcal{X}) \to (0 \in \mathbb{D})$ of $X$.

According to Ohta-Ono~\cite{Ohta-Ono-2005}, NPP~\cite{Nemethi-PPampu-2010}, PPSU~\cite{PPSU-2018}, for any minimal symplectic filling $W$ of $(X,0)$, there is a smoothing $\mathcal{X} \to \Delta$ of $(X,0)$ such that $W$ is diffeomorphic to the Milnor fiber of the smoothing. As mentioned in the Introduction, each symplectic filling is given by complement of the compactifying divisor of the singularity in the rational symplectic 4-manifold $V$ and the symplectic deformation type of the filling is actually determined by homology data of the compactifying divisor in $V$; cf. Bhupal-Ono~\cite{Bhupal-Ono-2012} and Lisca~\cite{Lisca-2008}. One may check that the two homology data for a symplectic filling and its corresponding Milnor fiber coincide; cf. PPSU~\cite{PPSU-2018} for example. Hence:

\begin{proposition}
Let $(X,0)$ be a quotient surface singularity. For any minimal symplectic filling $W$ of $(X,0)$, there is a smoothing $\mathcal{X} \to \Delta$ of $(X,0)$ such that $W$ is symplectic deformation equivalent to the Milnor fiber of the smoothing.
\end{proposition}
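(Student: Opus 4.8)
The plan is to assemble the proposition from the three ingredients the excerpt has already laid out: the diffeomorphism-level realization of symplectic fillings as Milnor fibers, the homological determination of the symplectic deformation type, and the matching of homology data between a filling and its associated Milnor fiber. First I would invoke the results of Ohta--Ono, NPP, and PPSU, cited just above the statement, to produce for a given minimal symplectic filling $W$ of $(X,0)$ a smoothing $\mathcal{X} \to \Delta$ whose Milnor fiber $M$ is diffeomorphic to $W$. This gives the existence of the smoothing and a diffeomorphism, but the proposition asks for the stronger conclusion of symplectic deformation equivalence, so the diffeomorphism alone is not enough; it is the scaffolding onto which the symplectic refinement must be grafted.

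Next I would bring in the structural description of symplectic fillings from Lisca and Bhupal--Ono: every minimal symplectic filling arises as the complement of the compactifying divisor of $X$ in a fixed rational symplectic $4$-manifold $V$, and---this is the crucial input---the symplectic deformation type of such a filling is completely determined by the homology data of that compactifying divisor in $V$. Thus to upgrade the diffeomorphism to a symplectic deformation equivalence, it suffices to exhibit both $W$ and the Milnor fiber $M$ as complements of compactifying configurations in $V$ and to check that their homology data agree. Since the Milnor fiber also admits such a description as a complement (again via PPSU), the problem reduces entirely to a comparison of two homology classes of divisors in $V$.

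The key step, then, is the verification that the homology data associated to the symplectic filling $W$ and the homology data associated to its corresponding Milnor fiber $M$ coincide; this is exactly the point PPSU establishes and which I would cite. The main obstacle I anticipate is precisely here: one must confirm that the bookkeeping of which symplectic spheres compactify $W$ matches, class by class in $H_2(V)$, the configuration compactifying $M$, rather than merely matching abstract invariants like the rational homology type. Once that identification of homology data is in hand, the classification theorem forces $W$ and $M$ into the same symplectic deformation class, and the proposition follows immediately. I would therefore organize the proof as: (1) realize $W$ as a Milnor fiber up to diffeomorphism; (2) recall that symplectic deformation type is governed by divisor homology data in $V$; (3) invoke the coincidence of homology data for $W$ and $M$; and (4) conclude symplectic deformation equivalence.
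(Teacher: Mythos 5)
Your proposal is correct and follows essentially the same route as the paper: the paper's argument is precisely to cite Ohta--Ono, N\'emethi--Popescu-Pampu, and PPSU for the diffeomorphism-level realization of $W$ as a Milnor fiber, to note via Lisca and Bhupal--Ono that the symplectic deformation type is determined by the homology data of the compactifying divisor in the rational symplectic $4$-manifold, and to invoke PPSU for the coincidence of the homology data of the filling and of the Milnor fiber. Your four-step organization matches this argument step for step.
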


Combined with Proposition~\ref{proposition:MilnorFiber-via-QBLDN}:

\begin{proposition}
Each minimal symplectic filling of a quotient surface singularity is symplectic deformation equivalent to the $4$-manifold that is obtained by rationally blowing down the corresponding $M$-resolution along the singularities of class $T_0$.
\end{proposition}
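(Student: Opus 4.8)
The plan is to obtain this statement by composing the two immediately preceding results, exploiting that symplectic deformation equivalence is transitive. First I would fix an arbitrary minimal symplectic filling $W$ of the quotient surface singularity $(X,0)$. By the preceding proposition (itself a consequence of Ohta--Ono, NPP and PPSU), there exists a smoothing $\mathcal{X} \to \Delta$ of $(X,0)$ whose Milnor fiber $X_t$ is symplectic deformation equivalent to $W$. This reduces the problem from the a priori unstructured object $W$ to the concrete Milnor fiber of an honest smoothing of $X$.

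Next I would invoke Proposition~\ref{proposition:MilnorFiber-via-QBLDN}, which applies to \emph{every} Milnor fiber of $X$ and in particular to the $X_t$ produced above. It supplies the corresponding $M$-resolution $Z \to X$ and identifies $X_t$, up to symplectomorphism, with the $4$-manifold obtained by rationally blowing down $Z$ along its singularities of class $T_0$; the passage from the $\mathbb{Q}$-Gorenstein smoothing of the $M$-resolution to a rational blow-down is exactly the content already recorded in the discussion leading up to that proposition, so no new surgery-theoretic input is needed. To conclude, I would chain the two equivalences: a symplectomorphism is in particular a symplectic deformation equivalence (realized by the constant family), so transitivity gives that $W$ is symplectic deformation equivalent to the rational blow-down of the $M$-resolution, as claimed.

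The genuine weight here lies entirely in the two results being combined, namely the existence of a smoothing realizing $W$ and Proposition~\ref{proposition:MilnorFiber-via-QBLDN}; the present statement is essentially bookkeeping. Consequently there is no serious obstacle to anticipate. The only point that requires a moment's care is the compatibility of the two notions of equivalence, so that a symplectomorphism and a symplectic deformation equivalence compose to a symplectic deformation equivalence; this is immediate and suffices to finish the argument.
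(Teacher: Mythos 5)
Your proposal matches the paper exactly: the paper derives this proposition with no further argument beyond the phrase ``Combined with Proposition~\ref{proposition:MilnorFiber-via-QBLDN},'' i.e., it chains the preceding proposition (existence of a smoothing whose Milnor fiber is symplectic deformation equivalent to $W$) with Proposition~\ref{proposition:MilnorFiber-via-QBLDN}, just as you do. Your added remark that a symplectomorphism composes with a symplectic deformation equivalence is the same implicit bookkeeping the paper leaves unstated, so your argument is correct and essentially identical.
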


For any cyclic quotient surface singularity $X$, PPSU~\cite[\S 10.1]{PPSU-2018} provide an explicit algorithm for constructing the $M$-resolution of $X$ corresponding to a given minimal symplectic filling $W$ of $X$. Indeed minimal symplectic fillings of $X$ can be parametrized by certain sequences $\underline{k}=(k_1,\dotsc,k_e)$ representing zero Hirzebruch-Jung continued fraction. In PPSU~\cite[\S 10.1]{PPSU-2018}, they find a way to construct a $M$-resolution of $X$ corresponding to a given sequence $\underline{k}$. On the other hand, HJS~\cite{Han-Jeon-Shin-2018} make up the list of all $P$-resolutions of non-cyclic quotient surface singularities. So one may deal with $M$-resolutions instead of minimal symplectic fillings.

\section{Symplectic flips and antiflips}
\label{section:symplectic-flips}

We introduce a way to change over from a rationally blown-down regular neighborhood to another simpler one, which may be regarded as a symplectic version of the so-called \emph{usual flip} in Urz\'ua~\cite[Proposition~2.15]{Urzua-2013}.

\begin{proposition}\label{proposition:symplectic-flip}
Let $Y = [b_1, \dotsc, b_t]-1-c$. Suppose that $b_i \ge 3$ and $b_j=2$ for all $j > i$ (if any). If $i \ge 2$ and $c-b_1+1 \ge 1$, then $Y$ is symplectic deformation equivalent to $Y^+=b_1-[b_2,\dotsc,b_{i-1}, b_i-1]-(c-b_1+1)$. If $i=1$ and $c-b_1+3 \ge 1$, then $Y$ is symplectic deformation equivalent to $Y^+ = (b_1-1)-(c-b_1+3)$.
\end{proposition}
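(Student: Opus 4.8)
The plan is to realize both $Y$ and $Y^{+}$ as minimal symplectic fillings of the \emph{same} lens space and then to identify them as the same filling, in the spirit of the algebraic flip of Urz\'ua~\cite[Proposition~2.15]{Urzua-2013}. By Proposition~\ref{proposition:MilnorFiber-via-QBLDN}, a regular neighborhood in which a class $T_0$ chain has been rationally blown down is symplectomorphic to the Milnor fiber of the $\mathbb{Q}$-Gorenstein smoothing of the associated $M$-resolution; thus both $Y$ and $Y^{+}$ are Milnor fibers, and the claim is equivalent to exhibiting a symplectic deformation equivalence between them. I expect the passage from $Y$ to $Y^{+}$ to be the symplectic counterpart of a flip in the $3$-fold total space of the smoothing, which is why antiflips, and not merely rational blow-downs, are forced to appear.

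The first task is the combinatorial bookkeeping, which relies on a numerical identity for the class $T_0$ chain $[b_1,\dots,b_t]$: writing $b_i$ for the last entry that is $\ge 3$, one has $b_1 = t-i+2$ whenever $i \ge 2$. I would prove this by induction along the build-up of singularities of class $T_0$ from $[4]$, checking that each of the two generating operations $[c_1,\dots,c_r]\mapsto[2,c_1,\dots,c_{r-1},c_r+1]$ and $[c_1,\dots,c_r]\mapsto[c_1+1,c_2,\dots,c_r,2]$ preserves it. The same induction, read backwards, shows that $[b_2,\dots,b_{i-1},b_i-1]$ is again of class $T_0$ (it is the outcome of $b_1-1$ reverse build-up steps, in the spirit of the reduction in Proposition~\ref{proposition:reduction}), so that $Y^{+}$ is a legitimate rationally blown-down neighborhood.

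With this in hand I would compute the two boundary lens spaces and verify $\partial Y = \partial Y^{+}$. On the underlying plumbing of $Y$ one blows down the central $(-1)$-curve and then cascades through the trailing $2$'s $b_{i+1}=\dots=b_t=2$; each blow-down lowers the terminal curve $c$ by one and produces a fresh $(-1)$-curve, and after $t-i+1 = b_1-1$ blow-downs (using the identity above) the plumbing becomes $b_1-\dots-b_{i-1}-(b_i-1)-(c-b_1+1)$, which is exactly the plumbing underlying $Y^{+}$. The hypothesis $c-b_1+1\ge 1$ guarantees the cascade terminates with an admissible terminal curve. In the case $i=1$ the chain is $[b_1,2,\dots,2]$, the cascade consumes every curve of the class $T_0$ configuration, and one is left with $(b_1-1)-(c-b_1+3)$ and no rationally blown-down piece at all, under the hypothesis $c-b_1+3\ge 1$.

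The main obstacle is that this cascade of blow-downs is \emph{not} literally legal on $Y$ itself: the central $(-1)$-curve abuts the class $T_0$ chain that has already been rationally blown down, so a naive blow-down would require operating inside the rational homology ball. The heart of the proof is therefore to justify interchanging the rational blow-down with the ordinary blow-ups and blow-downs above. I would do this by passing to the $\mathbb{Q}$-Gorenstein smoothings: the flip relating the total space of the smoothing of the $M$-resolution of $Y$ to that of $Y^{+}$ is an isomorphism in codimension one and hence induces a diffeomorphism of the two Milnor fibers. To upgrade this diffeomorphism to a genuine symplectic deformation equivalence, I would invoke the classification of symplectic fillings of lens spaces of Lisca~\cite{Lisca-2008} and Bhupal--Ono~\cite{Bhupal-Ono-2012}, for which the deformation type is determined by the homological data of the embedding into a rational symplectic $4$-manifold; since $Y$ and $Y^{+}$ share a boundary and both have second Betti number equal to two, matching these data completes the identification.
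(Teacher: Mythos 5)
Your combinatorial groundwork is sound and matches the paper: the identity $b_1=t-i+2$ for a class $T_0$ chain $[b_1,\dotsc,b_t]$ with trailing $2$'s, the fact that $[b_2,\dotsc,b_{i-1},b_i-1]$ is again of class $T_0$, and the blow-down cascade showing that $Y$ and $Y^+$ fill the same lens space are all exactly the opening moves of the paper's proof. The genuine gap is in your final step, which is where the entire content of the proposition lives. Having two minimal symplectic fillings of the same lens space with the same second Betti number does \emph{not} identify them: Lisca's and Bhupal--Ono's classification says the symplectic deformation type is determined by the homology classes of the compactifying divisor's spheres in the capped-off rational $4$-manifold $\mathbb{CP}^2\#N\overline{\mathbb{CP}^2}$, a far finer invariant than $b_2$, and distinct $M$-resolutions with the same number of $T_0$ singularities routinely produce inequivalent fillings with equal $b_2$ and equal boundary. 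Indeed, the whole point of the proposition is that this \emph{particular} pair of fillings with identical coarse invariants coincide; saying ``matching these data completes the identification'' without performing the match assumes the conclusion. The paper's proof consists precisely of that match: it tracks the tuples $(k_1,\dotsc,k_e)$ encoding rational homology disk smoothings through the two generating operations $[b_1,\dotsc,b_r]\mapsto[b_1+1,\dotsc,b_r,2]$ and $[b_1,\dotsc,b_r]\mapsto[2,b_1,\dotsc,b_r+1]$, computes the classes $e_{ij}$, $E_k$ of every sphere of the compactifying divisor before and after rationally blowing down $[b_1,\dotsc,b_t]$ (resp.\ $[b_2,\dotsc,b_i-1]$), and verifies the two resulting embeddings agree. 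None of this is in your proposal.

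Your alternative route through the $3$-fold flip is in principle viable but is left equally unfinished, and as written it conflates two strategies. If the flip of the $\mathbb{Q}$-Gorenstein smoothing exists with central fiber exactly the configuration underlying $Y^+$, then the birational map is an isomorphism over the punctured disk (the flipping and flipped curves lie in the central fiber), so the general fibers are \emph{biholomorphic}, and the symplectic deformation equivalence of the Milnor fibers follows directly from their Stein structures --- no appeal to Lisca or Bhupal--Ono would be needed at all. But then the burden shifts entirely to establishing the flip: that contracting the $(-1)$-curve through the Wahl singularity $[b_1,\dotsc,b_t]$ is a $K$-negative extremal contraction of the appropriate type, and that the flipped central fiber has the stated numerics --- which is precisely Urz\'ua's result~\cite[Proposition~2.15]{Urzua-2013} that the paper cites only as motivation while deliberately giving a self-contained symplectic proof. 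You verify neither hypothesis, and your phrase ``isomorphism in codimension one and hence induces a diffeomorphism of the two Milnor fibers'' elides the actual mechanism (the non-isomorphism locus sitting over $0\in\mathbb{D}$). Finally, note that the paper must treat the boundary cases separately --- when $c-b_1+1=1$ the chain $b_1-\dotsb-(b_i-1)-(c-b_1+1)$ is not minimal and the compactifying-divisor argument needs modification, and the $i=1$ case uses the explicit homology data of $[t+3,2,\dotsc,2]$ --- whereas your proposal handles these only at the level of the (unproblematic) boundary computation.
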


\begin{proof}
First we restrict ourselves to $i \geq 2$ and $c-b_1+1 \geq 2$ case. Since $[b_1,\dotsc, b_t]$ is of class $T_0$ and $b_j=2$ for any $j>i$, also $[b_1-(t-i),b_2, \dotsc, b_{i}]$ is of class $T_0$, consequently $b_1=(t-i)+2$. On the other hand, by contracting $(-1)$-curves successively, we get a linear chain $b_1-b_2- \dotsb -(b_i-1)-(c-(t-i+1))$ from $b_1-b_2- \dotsb- b_t-1-c$. Hence both $Y$ and $Y^+$ are symplectic fillings of the same cyclic quotient surface singularity.

Recall that each symplectic filling is given by complement of the compactifying divisor $C$ of the singularity and its symplectic deformation type is determined by the homology class of $C$ in rational symplectic $4$-manifold $Y\cong \mathbb{CP}^2 \# N \overline{\mathbb{CP}^2}$ which is obtained by gluing the compactifying divisor to the symplectic filling. For a linear chain $b_1-b_2-\dotsb-b_r$ with $b_i \geq 2$, the compactifying divisor $C$ is also a linear chain of $\mathbb{CP}^1$'s whose dual graph is
\begin{equation*}
\begin{tikzpicture}[scale=1]
\node[bullet] (00) at (0,0) [label=above:{$+1$}] {};
\node[bullet] (10) at (1,0) [label=above:{$1-a_1$}] {};
\node[bullet] (20) at (2,0) [label=above:{$-a_2$}] {};
\node[bullet] (30) at (3.5,0) [label=above:{$-a_e$}] {};
\node at (2.75,0)  {$\dotsb$};

\draw[-] (00)--(10);
\draw[-] (10)--(20);
\draw (2,0)--(2.5,0);
\draw (3,0)--(3.5,0);
\end{tikzpicture}
\end{equation*}
where $(a_1,\dotsc a_e)$ is dual Hirzebruch-Jung continued fraction of $(b_1,\dotsc, b_r)$ which means that $(b_1,\dotsc, b_r, 1, a_e, \dotsc, a_1)$ represents zero Hirzebruch-Jung continued fraction. Furthermore, embedding of $C$ to $Y$ is determined by certain $e$-tuples of integers $\underline{k}=(k_1,\dotsc,k_e)$ representing zero Hirzebruch-Jung continued fraction implying that the following linear chain
\begin{equation*}
\begin{tikzpicture}[scale=1]
\node[bullet] (00) at (0,0) [label=above:{$+1$}] {};
\node[bullet] (10) at (1,0) [label=above:{$1-k_1$}] {};
\node[bullet] (20) at (2,0) [label=above:{$-k_2$}] {};
\node[bullet] (30) at (3.5,0) [label=above:{$-k_e$}] {};
\node at (2.75,0)  {$\dotsb$};
\node at (4.5,0) {$(1\leq k_i \leq a_i)$};
\draw[-] (00)--(10);
\draw[-] (10)--(20);
\draw (2,0)--(2.5,0);
\draw (3,0)--(3.5,0);
\end{tikzpicture}
\end{equation*}
is obtained from
two distinct lines in $\mathbb{CP}^2$ by ordinary blowing ups. Then by blowing up $a_i-k_i$ distinct points at each vertex, we have an embedding of $K$. Therefore we could get explicit homology data of $K$ in $Y\cong \mathbb{CP}^2 \# N \overline{\mathbb{CP}^2}$ from the ways of blowing up to the above chain from two distinct lines in $\mathbb{CP}^2$; cf. Bhupal-Ono~\cite{Bhupal-Ono-2012} and Lisca~\cite{Lisca-2008}. Note that for a singularity $[b_1,\dotsc, b_r]$ of class $T_0$, an $e$-tuple $(k_1,\dotsc, k_e)$ corresponds to rational homology disk smoothing is obtained from $(2,1,2)$ by blowing ups at only intersection points on $(-1)$ curves so that there is only one $i$ such that $k_j=1$ and $a_j=2$. One can easily check that $k_i=a_i$ for $i\neq j$. In case of $(2,1,2)$, explicit homology data of corresponding linear chain in $\mathbb{CP}^2 \# 2 \overline{\mathbb{CP}^2}$ as follows. (Here $l$ denotes the homology class of $\mathbb{CP}^1$ in $\mathbb{CP}^2$ while $e_i$ denotes homology class of an exceptional curve.) %
\begin{equation*}
\begin{tikzpicture}[scale=1.5]
\node[bullet] (00) at (0,0) [label=above:{$+1$}] {};
\node[bullet] (10) at (1,0) [label=above:{$1-2$}] {};
\node[bullet] (20) at (2,0) [label=above:{$-1$}] {};
\node[bullet] (30) at (3,0) [label=above:{$-2$}] {};

\node[below] at (0,-0.1) {$l$};

\node[below,align=center] at (1,-.1) {$l-e_1-e_2$};
\node[below] at (2,-0.13) {$e_2$};
\node[below] at (3,-0.1) {{$e_1-e_2$}};
\draw[-] (00)--(30);

\end{tikzpicture}
\end{equation*}
Hence if $(k_1, \dotsc, k_e)$ is an $e$-tuple for rational homology disk smoothing for a singularity $[b_1,\dotsc, b_r]$ of class $T_0$, then  homology class of $i$th vertex in corresponding linear chain in $\mathbb{CP}^2 \# (e-1) \overline{\mathbb{CP}^2}$ is of the form $e_{i1}-\displaystyle\sum_{j=2}^{k_i} e_{ij}$ except the first and the second one where $e_{ij}\in \{e_1,\dotsc, e_{e-1}\}$. One can observe that homology class of the second vertex is of the form $l-e_{11}-e_{12}-\dotsb-{e_{1k_1}}$ and a homology class $e_{11}=e_{e1}=e_i$ does not appear in vertices other than the second and the last one.
\begin{equation*}
\begin{tikzpicture}[scale=1.5]
\node[bullet] (00) at (0,0) [label=above:{$+1$}] {};
\node[bullet] (10) at (1,0) [label=above:{$1-k_1$}] {};
\node[bullet] (20) at (2,0) [label=above:{$-k_2$}] {};
\node[bullet] (30) at (3.5,0) [label=above:{$-k_e$}] {};

\node[below] at (0,-0.1) {$l$};

\node[below,align=center] at (1,-.1) {$l-e_{11}-\dotsb-e_{1k_1}$};
\node[below] at (3.5,-0.1) {{$e_{e1}-\dotsb-e_{ek_e}$}};
\node at (2.75,0){$\dotsb$};
\draw[-] (0,0)--(2.5,0);
\draw (3,0)--(3.5,0);
\end{tikzpicture}
\end{equation*}
Furthermore, $(2, k_1,\dotsc, k_e+1)$ corresponds to rational homology disk smoothing for $[b_1+1, \dotsc, b_r, 2]$ and $(k_1+1, \dotsc, k_e, 2)$ corresponds to rational homology disk smoothing for $[2, b_1, \dotsc, b_r+1]$ and homology data of  corresponding linear chain in $\mathbb{CP}^2 \# e\overline{\mathbb{CP}^2}$ changes as follows. Here homology class $e_{ij}\in \{e_1,\dotsc, e_{e-1}\}$ comes from homology data of linear chain corresponds to $(k_1,\dotsc, k_e)$ in $\mathbb{CP}^2 \#(e-1)\overline{\mathbb{CP}^2}$.
\begin{equation}\label{operation1}
\begin{tikzpicture}[scale=2.3]
\node[bullet] (00) at (0,0) [label=above:{$+1$}] {};
\node[bullet] (10) at (1,0) [label=above:{$1-2$}] {};
\node[bullet] (20) at (2,0) [label=above:{$-k_1$}] {};
\node[bullet] (30) at (3.5,0) [label=above:{$-(k_e+1)$}] {};

\node[below] at (0,-0.1) {$l$};

\node[below,align=center] at (1,-.1) {$l-e_{11}-E$};
\node[below,align=center] at (2,-.1) {$E-e_{12}-\dotsb-e_{1k_1}$};

\node[below] at (3.5,-0.1) {{$e_{e1}-\dotsb-e_{ek_e}-E$}};
\node at (2.75,0){$\dotsb$};
\draw[-] (0,0)--(2.5,0);
\draw (3,0)--(3.5,0);
\end{tikzpicture}
\end{equation}
\begin{equation}\label{operation2}
\begin{tikzpicture}[scale=2.3]
\node[bullet] (00) at (0,0) [label=above:{$+1$}] {};
\node[bullet] (10) at (1,0) [label=above:{$1-(k_1+1)$}] {};
\node[bullet] (20) at (2,0) [label=above:{$-k_2$}] {};
\node[bullet] (30) at (3.5,0) [label=above:{$-2$}] {};

\node[below] at (0,-0.1) {$l$};

\node[below,align=center] at (1,-.1) {$l-E-e_{11}-\dotsb-e_{1k_1}$};
\node[below] at (3.5,-0.1) {{$E-e_{e1}$}};
\node at (2.75,0){$\dotsb$};
\draw[-] (0,0)--(2.5,0);
\draw (3,0)--(3.5,0);
\end{tikzpicture}
\end{equation}

Let $(a_1, \dotsc, a_e)$ be dual continued fraction of $b_2-\dotsb-b_{i-1}$. Then dual graph of the compactifying divisor $C$ of a cyclic quotient surface singularity $X$ determined by a linear chain $b_1-b_2-\dotsb-(b_{i-1}-1)-(c-b_1+1)$ and homology data for the minimal resolution of $X$ in rational symplectic $4$-manifold $Y\cong \mathbb{CP}\# (c+e+i-2) \overline{\mathbb{CP}^2}$ can be given as follows (Here $e_i$ and $E_j$ denote homology class of exceptional spheres.)
\begin{equation*}
\begin{tikzpicture}[scale=1.2]
\node[bullet] (00) at (0,0) [label=above:{$1$}] [label=below:{$l$}] {};
\node[bullet] (10) at (1,0) [label=above:{$1-2$}] [label=below:{$l-E_1-E_2$}]{};
\node[bullet] at (2,0) [label=above:{$-2$}] [label=below:]{};
\node[empty]  at (2.75,0) [] {$\dotsb$};
\node[bullet] at (3.5,0) [label=above:{$-2$}] [label=below:{$E_{b_1-2}-E_{b_1-1}$}]{};

\node[bullet] at (4.5,0) [label=above:{$-a_1-1$}]{};
\node[bullet] at (5.5,0) [label=above:{$-a_2$}]{};

\node[empty] at (6.25,0) [label=above:{}]{$\dotsb$};
\node[bullet] at (7,0) [label=above:{$-a_e-1$}]{};
\node[bullet] at (8,0) [label=above:{$-2$}][label=below:{$E_{b_1}-E_{b_1+1}$}]{};
\node[empty] at (8.75,0) [label=above:]{$\dotsb$};
\node[bullet] at (9.5,0) [label=above:{$-2$}][label=below:{$E_{c-2}-E_{c-1}$}]{};

\draw (0,0)--(2.5,0);
\draw (3,0)--(6,0);
\draw (6.5,0)--(8.5,0);
\draw (9,0)--(9.5,0);
\draw [thick, decoration={brace,mirror,raise=1.5em}, decorate] (1,0)--(3.5,0)
node [pos=0.5,anchor=north,yshift=-1.55em] {$b_1-2$};
\draw [thick, decoration={brace,mirror,raise=1.5em}, decorate] (8,0)--(9.5,0)
node [pos=0.5,anchor=north,yshift=-1.55em] {$c-b_1-1$};
\end{tikzpicture}
\end{equation*}

\begin{equation*}
\begin{tikzpicture}[scale=1.3]
\node[bullet] (00) at (0,0) [label=above:{$1$}] [label=below:{$l$}] {};
\node[bullet] (10) at (1,0) [label=above:{$1-2$}] [label=below:{}]{};
\node[bullet] at (2,0) [label=above:{$-2$}] [label=below:]{};
\node[empty]  at (2.75,0) [] {$\dotsb$};
\node[bullet] at (3.5,0) [label=above:{$-2$}] [label=below:{}]{};

\node[bullet] at (4.5,0) [label=above:{$-a_1-1$}][label=below:{$E_{b_1-1}-e_1-\dotsb-e_{a_1}$}]{};
\node[bullet] at (5.5,0) [label=above:{$-a_2$}]{};

\node[empty] at (6.25,0) [label=above:{}]{$\dotsb$};
\node[bullet] at (7,0) [label=above:{$-a_e-1$}][label=below:{$e_{e+i-a_e}-\dotsb-e_{e+i-1}-E_{b_1}$}]{};
\node[bullet] at (8,0) [label=above:{$-2$}][label=below:{}]{};
\node[empty] at (8.75,0) [label=above:]{$\dotsb$};
\node[bullet] at (9.5,0) [label=above:{$-2$}][label=below:{}]{};

\draw (0,0)--(2.5,0);
\draw (3,0)--(6,0);
\draw (6.5,0)--(8.5,0);
\draw (9,0)--(9.5,0);
\end{tikzpicture}
\end{equation*}
while embedding of $b_2-\dotsb-(b_{i}-1)$ can be given as follows.
\begin{equation*}
\begin{tikzpicture}[scale=1.5]
\node[bullet] (00) at (0,0) [label=above:{$-b_2$}] {};
\node[bullet] (20) at (1.5,0) [label=above:{$-b_3$}] {};
\node[bullet] (30) at (4,0) [label=above:{$-b_{i}+1$}] {};

\node[below] at (0,-0.1) {$e_1-e_2-\dotsb-e_{b_2}$};

\node[below,align=center] at (1.5,-.1) {$e_{b_2}-\dotsb-e_{b_2+b_3-1}$};
\node[below] at (4,-0.1) {{$e_{e+i-b_i-3}-\dotsb-e_{e+i-1}$}};
\node at (2.75,0){$\dotsb$};
\draw[-] (0,0)--(2.5,0);
\draw (3,0)--(4,0);
\end{tikzpicture}
\end{equation*}

Suppose that $e$-tuple $(k_1,\dotsb, k_e)$ corresponds to rational homology disk smoothing of a cyclic quotient surface singularity $X'$ determined by $b_2-\dotsb-(b_i-1)$ and homology data for $e$-tuple in $\mathbb{CP}^2 \# (e-1) \overline{\mathbb{CP}^2}$ is given as follows.
\begin{equation*}
\begin{tikzpicture}[scale=1.5]
\node[bullet] (00) at (0,0) [label=above:{$+1$}] {};
\node[bullet] (10) at (1,0) [label=above:{$1-k_1$}] {};
\node[bullet] (20) at (2,0) [label=above:{$-k_2$}] {};
\node[bullet] (30) at (3.5,0) [label=above:{$-k_e$}] {};

\node[below] at (0,-0.1) {$l$};

\node[below,align=center] at (1,-.1) {$l-e_{11}-\dotsb-e_{1k_1}$};
\node[below] at (3.5,-0.1) {{$e_{e1}-\dotsb-e_{ek_e}$}};
\node at (2.75,0){$\dotsb$};
\draw[-] (0,0)--(2.5,0);
\draw (3,0)--(3.5,0);
\end{tikzpicture}
\end{equation*}
Then homology data of the compactifying divisor $C'$ corresponds to rational homology disk smoothing of $[b_2,\dotsc ,b_{i}-1]$ in $\mathbb{CP}^2\# e\overline{\mathbb{CP}^2}$ can be given as follows (Here $e_e$ represents homology class of exceptional curve coming from blow up from $\mathbb{CP}^2\# (e-1)\overline{\mathbb{CP}^2}$ to $\mathbb{CP}^2\# e\overline{\mathbb{CP}^2}$.)
\begin{equation*}
\begin{tikzpicture}[scale=1.5]
\node[bullet] (00) at (0,0) [label=above:{$+1$}] {};
\node[bullet] (10) at (1,0) [label=above:{$1-a_1$}] {};
\node[bullet] (20) at (2,0) [label=above:{$-a_2$}] {};
\node[bullet] (30) at (3.5,0) [label=above:{$-a_j$}] {};
\node[bullet]  at (5,0) [label=above:{$-a_e$}] {};

\node[below] at (0,-0.1) {$l$};

\node[below,align=center] at (1,-.1) {$l-e_{11}-\dotsb-e_{1k_1}$};
\node[below] at (3.5,-0.1) {{$e_{j1}-e_e$}};

\node[below] at (5,-0.1) {{$e_{e1}-\dotsb-e_{ek_e}$}};
\node at (2.75,0){$\dotsb$};

\node at (4.25,0){$\dotsb$};
\draw[-] (0,0)--(2.5,0);
\draw (3,0)--(4,0);
\draw (4.5,0)--(5,0);

\end{tikzpicture}
\end{equation*}
while homology data of $C'$ for the minimal resolution in $\mathbb{CP}^2 (e+i-1) \overline{\mathbb{CP}^2}$ can be given as follows.
\begin{equation*}
\begin{tikzpicture}[scale=1.5]
\node[bullet] (00) at (0,0) [label=above:{$+1$}] {};
\node[bullet] (10) at (1,0) [label=above:{$1-a_1$}] {};
\node[bullet] (20) at (2,0) [label=above:{$-a_2$}] {};
\node[bullet] (30) at (3.5,0) [label=above:{$-a_e$}] {};

\node[below] at (0,-0.1) {$l$};

\node[below,align=center] at (1,-.1) {$l-e_{1}-\dotsb-e_{a_1}$};
\node[below] at (3.5,-0.1) {{$e_{e+i-a_e}-\dotsb-e_{e+i-1}$}};
\node at (2.75,0){$\dotsb$};
\draw[-] (0,0)--(2.5,0);
\draw (3,0)--(3.5,0);
\end{tikzpicture}
\end{equation*}
Hence if we rationally blow down $[b_2,\dotsc ,b_i-1]$ from the linear chain $b_1-\dotsb-(b_i-1)-(c-b_1+1)$ , then we get new rational symplectic $4$-manifold $Y'\cong \mathbb{CP}^2 (c+e-1) \overline{\mathbb{CP}^2}$ and homology data of compactifying divisor $C$ in $Y'$ changes as follows.
\begin{equation*}
\begin{tikzpicture}[scale=1.2]
\node[bullet] (00) at (0,0) [label=above:{$1$}] [label=below:{$l$}] {};
\node[bullet] (10) at (1,0) [label=above:{$1-2$}] [label=below:{$l-E_1-E_2$}]{};
\node[bullet] at (2,0) [label=above:{$-2$}] [label=below:]{};
\node[empty]  at (2.75,0) [] {$\dotsb$};
\node[bullet] at (3.5,0) [label=above:{$-2$}] [label=below:{$E_{b_1-2}-E_{b_1-1}$}]{};

\node[bullet] at (4.5,0) [label=above:{$-a_1-1$}]{};
\node[bullet] at (5.5,0) [label=above:{$-a_2$}]{};

\node[empty] at (6.25,0) [label=above:{}]{$\dotsb$};
\node[bullet] at (7,0) [label=above:{$-a_e-1$}]{};
\node[bullet] at (8,0) [label=above:{$-2$}][label=below:{$E_{b_1}-E_{b_1+1}$}]{};
\node[empty] at (8.75,0) [label=above:]{$\dotsb$};
\node[bullet] at (9.5,0) [label=above:{$-2$}][label=below:{$E_{c-2}-E_{c-1}$}]{};

\draw (0,0)--(2.5,0);
\draw (3,0)--(6,0);
\draw (6.5,0)--(8.5,0);
\draw (9,0)--(9.5,0);
\draw [thick, decoration={brace,mirror,raise=1.5em}, decorate] (1,0)--(3.5,0)
node [pos=0.5,anchor=north,yshift=-1.55em] {$b_1-2$};
\draw [thick, decoration={brace,mirror,raise=1.5em}, decorate] (8,0)--(9.5,0)
node [pos=0.5,anchor=north,yshift=-1.55em] {$c-b_1-1$};
\end{tikzpicture}
\end{equation*}

\begin{equation*}
\begin{tikzpicture}[scale=1.45]
\node[bullet] (00) at (1,0) [label=above:{$1$}] [label=below:{$l$}] {};
\node[bullet] (10) at (1.5,0) [label=above:{$1-2$}] [label=below:{}]{};
\node[bullet] at (2,0) [label=above:{$-2$}] [label=below:]{};
\node[empty]  at (2.5,0) [] {$\dotsb$};
\node[bullet] at (3,0) [label=above:{$-2$}] [label=below:{}]{};

\node[bullet] at (4,0) [label=above:{$-a_1-1$}][label=below:{$E_{b_1-1}-\textcolor{red}{e_{11}-\dotsb-e_{1k_1}}$}]{};

\node[empty] at (4.75,0) [label=above:{}]{$\dotsb$};

\node[bullet] at (5.5,0) [label=above:{$-a_j$}][label=below:{\textcolor{red}{$e_{j1}-e_{e}$}}]{};

\node[empty] at (6.25,0) [label=above:{}]{$\dotsb$};

\node[bullet] at (7.,0) [label=above:{$-a_e-1$}][label=below:{\textcolor{red}{$e_{e1}-\dotsb-e_{ek_e}$}$-E_{b_1}$}]{};
\node[bullet] at (8.,0) [label=above:{$-2$}][label=below:{}]{};
\node[empty] at (8.5,0) [label=above:]{$\dotsb$};
\node[bullet] at (9.,0) [label=above:{$-2$}][label=below:{}]{};

\draw (1,0)--(2.25,0);
\draw (2.75,0)--(4.5,0) (5.,0)--(6.,0);
\draw (6.5,0)--(8.25,0) (8.75,0)--(9,0);
\end{tikzpicture}
\end{equation*}

Note that the changes only occur at vertices with $e_i$. To get homology data of $C$ for $Y$, we start from a linear chain $b_1-b_2-\dotsb-(b_i-1)-c$. As a symplectic filling of $X$, homology data of the linear chain itself is same as the minimal resolution of $X$ but ambient rational symplectic $4$-manifold has changed from $Y\cong \mathbb{CP}^2\# (c+e+i-2) \overline{\mathbb{CP}^2}$ to $Y\#(t-i+1)\overline{\mathbb{CP}^2}\cong \mathbb{CP}^2\# (c+e+t-1) \overline{\mathbb{CP}^2}$ so that homology data of embedding of the linear chain can be given as follows. (Here $E'_i$ denote homology class of exceptional spheres from $(t-i+1)\overline{\mathbb{CP}^2}$ in $Y\# (t-i+1)\overline{\mathbb{CP}^2}$.)
\begin{equation*}
\begin{tikzpicture}[scale=1.2]
\node[bullet] (00) at (0,0) [label=above:{$-b_1$}][label=below:{$E_1-\dotsb-E_{b_1-1}-e_1$}] {};
\node[bullet] (20) at (2,0) [label=above:{$-b_2$}][label=below:{$e_1-\dotsb-e_{b_2}$}] {};
\node[bullet] (30) at (4,0) [label=above:{$-b_{i}$}][label=below:{$e_{e+i-b_i-3}-\dotsb-e_{e+i-1}-E'_{1}$}]{};
\node[bullet] (30) at (6,0) [label=above:{$-b_{i+1}=-2$}][label=below:{$E'_{1}-E'_{2}$}] {};
\node[bullet] (30) at (8,0) [label=above:{$-b_{t}=-2$}][label=below:{$E'_{t-i}-E'_{t-i+1}$}] {};

\node at (3,0){$\dotsb$};
\node at (7,0){$\dotsb$};

\draw[-] (0,0)--(2.5,0);
\draw (3.5,0)--(6.5,0) (7.5,0)--(8,0);

\end{tikzpicture}
\end{equation*}

On the other hand, using operation \ref{operation1} and \ref{operation2} in the previous paragraph, we could get $(e+b_1-1)$-tuple $(2,\dotsc, 2, k_1+1, k_2, \dotsc, k_e, b_1)$ corresponds to rational homology disk smoothing $[b_1,\dotsc, b_t]$ keeping track of homology data in terms of homology data of $(k_1,\dotsc, k_e)$ in $\mathbb{CP}^2 \# (e-1)\overline{\mathbb{CP}^2}$. Explicitly, we have embedding of following linear chain to $\mathbb{CP}^2\# (e-1) \overline{\mathbb{CP}^2} \# (b_1-1)\overline{\mathbb{CP}^2}$ where $E_{i}$ comes from $(b_1-1)\overline{\mathbb{CP}^2}$ and $e_{ij}$ comes from $(e-1)\overline{\mathbb{CP}^2}$ as before.

\begin{equation*}
\begin{tikzpicture}[scale=1.3]
\node[bullet] (00) at (0,0) [label=above:{$1$}] [label=below:{$l$}] {};
\node[bullet] (10) at (1,0) [label=above:{$1-2$}] [label=below:{$l-E_1-E_2$}]{};
\node[bullet] at (2,0) [label=above:{$-2$}] [label=below:]{};
\node[empty]  at (2.75,0) [] {$\dotsb$};
\node[bullet] at (3.5,0) [label=above:{$-2$}] [label=below:{$E_{b_1-2}-E_{b_1-1}$}]{};

\node[bullet] at (4.5,0) [label=above:{$-k_1-1$}]{};
\node[bullet] at (5.5,0) [label=above:{$-k_2$}]{};

\node[empty] at (6.25,0) [label=above:{}]{$\dotsb$};
\node[bullet] at (7,0) [label=above:{$-k_e$}]{};
\node[bullet] at (8,0) [label=above:{$-b_1$}][label=below:{$E_{1}-\dotsb-E_{b_1-1}-e_{e1}$}]{};

\draw (0,0)--(2.5,0);
\draw (3,0)--(6,0);
\draw (6.5,0)--(8,0);
\draw [thick, decoration={brace,mirror,raise=1.5em}, decorate] (1,0)--(3.5,0)
node [pos=0.5,anchor=north,yshift=-1.55em] {$b_1-2$};

\end{tikzpicture}
\end{equation*}

\begin{equation*}
\begin{tikzpicture}[scale=1.45]
\node[bullet] (00) at (1,0) [label=above:{$1$}] [label=below:{$l$}] {};
\node[bullet] (10) at (1.5,0) [label=above:{$1-2$}] [label=below:{}]{};
\node[bullet] at (2,0) [label=above:{$-2$}] [label=below:]{};
\node[empty]  at (2.5,0) [] {$\dotsb$};
\node[bullet] at (3,0) [label=above:{$-2$}] [label=below:{}]{};

\node[bullet] at (4,0) [label=above:{$-k_1-1$}][label=below:{$E_{b_1-1}-\textcolor{black}{e_{11}-\dotsb-e_{1k_1}}$}]{};

\node[empty] at (4.75,0) [label=above:{}]{$\dotsb$};

\node[bullet] at (5.5,0) [label=above:{$-k_j$}][label=below:{\textcolor{black}{$e_{j1}$}}]{};

\node[empty] at (6.25,0) [label=above:{}]{$\dotsb$};

\node[bullet] at (7.,0) [label=above:{$-k_e$}][label=below:{\textcolor{black}{$e_{e1}-\dotsb-e_{ek_e}$}}]{};
\node[bullet] at (8.,0) [label=above:{$-b_1$}][label=below:{}]{};

\draw (1,0)--(2.25,0);
\draw (2.75,0)--(4.5,0) (5.,0)--(6.,0);
\draw (6.5,0)--(8,0) ;
\end{tikzpicture}
\end{equation*}

The above observation implies that if we rationally blow down $[b_1, \dotsc , b_t]$ from $Y\#(t-i+1)\overline{\mathbb{CP}^2}\cong \mathbb{CP}^2\# (c+e+t-1) \overline{\mathbb{CP}^2}$, we get a rational symplectic manifold $Y''\cong \mathbb{CP}^2 (c+e-1) \overline{\mathbb{CP}^2}$ and homology data of $C$ in $Y''$ can be given the same as $C$ in $Y'$ which means that $Y$ is symplectic deformation equivalent to $Y^+$. The situation is different for $c-b_1+1=1$ case, because linear chain $b_1-b_2-\dotsb-(b_i-1)-(c-b_1+1)$ is not minimal. Instead, if we start from $b_1-b_2-\dotsb-(b_k-1)$ which is obtained by contracting $(-1)$ curves successively, then the similar argument shows that $Y$ and $Y^+$ are symplectic deformation equivalent. In case of $i=1$, similar argument as for $i\geq 2$ case shows that $b_1=t+3$ and $(b_1-1)-(c-b_1+3)$ is obtained from $b_1-\dotsb-b_t-1-c$ by contracting $(-1)$ curves successively so that $Y$ and $Y^+$ are symplectic fillings of same cyclic quotient surface singularity $X$. One can easily compute homology data of $C$ for $Y$ using following homology data for rational homology disk smoothing of $[b_1,\dotsc, b_t]=[t+3,2,\dotsc, 2]$ so that it is same as that of $C$ for linear chain $(b_1-1)-(c-b_1+3)$ which is equal to $Y^+$.
\begin{equation*}
\begin{tikzpicture}[scale=2]
\node[bullet] (00) at (0,0) [label=above:{$1$}] [label=below:{$l$}] {};
\node[bullet] (10) at (1,0) [label=above:{$1-2$}] [label=below:{$l-e_1-e_2$}]{};
\node[bullet] (250) at (2.5,0) [label=above:{$-2$}] [label=below:{$e_t-e_{t+1}$}]{};
\node[empty] (20) at (2,0) [] {};
\node[empty] (150) at (1.5,0) [] {};

\node[bullet] (350) at (3.5,0) [label=above:{$-t$}] [label=below:{$e_1-\cdots-e_{t}$}]{};

\draw[-] (00)--(10);
\draw[-] (10)--(150);
\draw[dotted] (150)--(20);
\draw[-] (20)--(250);
\draw[-] (250)--(350);
\draw [thick, decoration={brace,mirror,raise=1.5em}, decorate] (10)--(250)
node [pos=0.5,anchor=north,yshift=-1.55em] {$t$};
\end{tikzpicture}
\end{equation*}
 \end{proof}

%
%
%

\begin{definition}[Symplectic flips and antiflips]
A \emph{symplectic flip} is the operation that transforms $Y=[b_1, \dotsc, b_t]-1-c$ to $Y^+$ as the above Proposition~\ref{proposition:symplectic-flip}. On the other hand, a \emph{symplectic antiflip} is the converse operation to a symplectic flip; that is, it transforms $Y^+$ to $Y$.
\end{definition}

\section{Cyclic quotient surface singularities}
\label{section:cyclic}

We prove Theorem~\ref{theorem:main} for cyclic quotient surface singularities; Theorem~\ref{theorem:main-cyclic}.

Let $(X,0)$ be a cyclic quotient surface singularity and let $\widetilde{X} \to X$ be its minimal resolution. Let $W$ be a minimal symplectic filling of $X$. Let $Z \to X$ be the $M$-resolution corresponding to $W$. We denote again by $Z$ a general fiber of the $\mathbb{Q}$-Gorenstein smoothing of $Z$ if no confusion arises. That is, one can say that $W$ is symplectic deformation equivalent to $Z$.

We begin with the simplest case.

\begin{lemma}\label{lemma:n=2}
If an $M$-resolution $Z$ of $X$ is of the form $[a_1,\dotsc,a_r]-1-[b_1,\dotsc,b_s]$, then there is a sequence of rational blow-ups and symplectic flips that transforms $Z$ into $\widetilde{X}$.
\end{lemma}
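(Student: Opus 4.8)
The plan is to build the required sequence by first rationally blowing up one of the two class-$T_0$ singularities and then repeatedly applying the symplectic flip of Proposition~\ref{proposition:symplectic-flip} to the other, reading off the outcome from the combinatorics of the chains. Since a symplectic flip requires its input to have the shape $[\,\cdots\,]-1-c$ with a single smooth sphere $c$ to the right of the central $(-1)$-sphere, the first move is to rationally blow up the right-hand singularity $[b_1,\dotsc,b_s]$. This replaces the corresponding rational homology ball by the smooth chain $b_1-\dotsb-b_s$, turning $Z$ into $[a_1,\dotsc,a_r]-1-b_1-\dotsb-b_s$, in which the leftmost exposed sphere $b_1$ now plays the role of the single curve $c$ adjacent (through the $(-1)$-sphere) to the remaining singularity $[a_1,\dotsc,a_r]$.

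\emph{Flipping the remaining singularity.} Let $i$ be the largest index with $a_i\ge 3$. I would apply Proposition~\ref{proposition:symplectic-flip} to $[a_1,\dotsc,a_r]-1-b_1$ with $c=b_1$; the hypothesis $c-a_1+1\ge 1$ needed for the flip is exactly $b_1\ge a_1$, which is guaranteed by Lemma~\ref{lemma:b1>=a1}. The flip emits the smooth sphere $a_1$ on the far left, shrinks $[a_1,\dotsc,a_r]$ to the smaller class-$T_0$ singularity $[a_2,\dotsc,a_{i-1},a_i-1]$, and replaces $b_1$ by $(b_1-a_1+1)$. Two things can happen: if $b_1=a_1$, then $(b_1-a_1+1)$ is itself a $(-1)$-sphere, so the configuration is again of the form (smaller singularity)$-1-(b_2-\dotsb-b_s)$ and one may flip again; if $b_1>a_1$, the shrunken singularity becomes flanked by smooth spheres of square $\le-2$ and no further flip applies to it.

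\emph{Iterating and finishing.} Iterating this, each flip strictly decreases the length of the remaining class-$T_0$ singularity while accumulating a smooth chain on the left, and throughout the process there is always exactly one class-$T_0$ singularity present. When a flip leaves that singularity flanked on both sides by spheres of square $\le-2$, I rationally blow it up to turn it into its smooth resolution chain; when instead the singularity is reduced all the way to $[4]$ adjacent to a $(-1)$-sphere, the final flip (the $i=1$ case of Proposition~\ref{proposition:symplectic-flip}) removes it entirely. In either case the output is a smooth linear chain of spheres, which I claim is $\widetilde{X}$. Note that this uses only rational blow-ups and symplectic flips, with no ordinary blow-downs, as demanded by the statement.

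\emph{Main obstacle.} The delicate points are (a) checking that the hypotheses of Proposition~\ref{proposition:symplectic-flip}, in particular the inequality governing $c$, persist at every step of the iteration: for the first flip this is precisely Lemma~\ref{lemma:b1>=a1}, but for the subsequent flips the analogous inequality between the first entry of the shrunken singularity and the adjacent sphere $b_2,b_3,\dotsc$ must be extracted from the $M$-resolution structure carried along the way; and (b) verifying that the terminal smooth chain contains no $(-1)$-sphere, so that it is genuinely the minimal resolution $\widetilde{X}$ and not some blow-up of it. Point (b) is where Proposition~\ref{proposition:initial-curve} should enter: because the induced map $\widetilde{Z}\to\widetilde{X}$ never contracts the initial curve of a singularity, the flips mirror exactly the contractions realizing $f$, and the bookkeeping of self-intersection numbers closes up at $\widetilde{X}$ without producing a spurious $(-1)$-sphere. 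Controlling this bookkeeping, together with the boundary case $c-a_1+1=1$ and the $i=1$ endgame, is the technical heart of the argument.
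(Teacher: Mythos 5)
Your proposal is correct and follows essentially the same route as the paper's proof: rationally blow up $[b_1,\dotsc,b_s]$, flip $[a_1,\dotsc,a_r]$ against $c=b_1$ with the hypothesis supplied by Lemma~\ref{lemma:b1>=a1}, iterate exactly when $b_1-a_1+1=1$, invoke Proposition~\ref{proposition:initial-curve} to control the endgame, and finish with a rational blow-up of any residual class-$T_0$ singularity. The only cosmetic differences are that the paper disposes of the $[a_1,2,\dotsc,2]$ (i.e.\ $i=1$) case up front rather than as an endgame (note this case can arise for any $[a,2,\dotsc,2]$, not only $[4]$), and that the paper derives termination from the survival of the initial curve of $[b_1,\dotsc,b_s]$ where you use the strict length decrease of the left singularity plus the initial-curve proposition to exclude a terminal $(-1)$-sphere --- equivalent bookkeeping, stated at the same level of detail as the paper itself.
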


\begin{proof}
We first rationally blow up $Z$ so that we have
\begin{equation*}
Z' := [a_1,\dotsc,a_r]-1-b_1-\dotsb-b_s.
\end{equation*}

If $[a_1,\dotsc,a_r]=[a_1,2\dotsc,2]$, then, by applying a symplectic flip to $Z'$, we get $\widetilde{X}=(a_1-1)-(b_1-a_1+3)-b_2-\dotsb-b_s$.

Suppose now that $[a_1,\dotsc,a_r] \neq [a_1,2,\dotsc,2]$. After symplectically flipping $Z'$, we have
\begin{equation*}
Z'^+ := a_1-[a_2,\dotsc,a_{i-1},a_i-1]-(b_1-a_1+1)-b_2-\dotsc-b_s,
\end{equation*}
where $a_i \ge 3$ but $a_{i+1}=\dotsb=a_r=2$.

Note that $b_1-a_1+1 \ge 1$ be Lemma~\ref{lemma:b1>=a1}. If $b_1-a_1+1 \neq 1$, we rationally blow up $Z'^+$ along $[a_2,\dotsc,a_{i-1},a_i-1]$ so that we get
\begin{equation*}
\widetilde{X}=a_1-a_2-\dotsb-a_{i-1}-(a_i-1)-(b_1-a_1+1)-b_2-\dotsc-b_s.
\end{equation*}

On the other hand, if $b_1-a_1+1=1$, then we can repeat symplectic flips again. This process should stop after finitely many flips because the initial curve of $[b_1,\dotsc,b_t]$ cannot be killed by flips (which are just ordinary blow-downs in the level of resolutions) by Proposition~\ref{proposition:initial-curve}. That is, if $b_j$ is the initial curve of $[b_1,\dotsc,b_t]$, then its proper transforms after symplectic flips cannot be a $(-1)$-curve.

Therefore, after finitely many symplectic flips, we may end up with a configuration $Z^+$ without $(-1)$-curves. Then we get $\widetilde{X}$ by rationally blowing up $Z^+$ along singularities of class $T_0$ (if any).
\end{proof}

\begin{remark}\label{remark:keep-the-other-parts}
Let $a_i$ and $b_j$ be the initial curves of $[a_1,\dotsc,a_r]$ and $[b_1,\dotsc,b_s]$, respectively. The initial curves $a_i$ and $b_j$ are not killed by symplectic flips according to Proposition~\ref{proposition:initial-curve}. So $\widetilde{X}$ contains linear chains of the form $a_1-\dotsc-a_{i-1}-a_i'$ and $b_j'-b_{j+1}-\dotsb-b_s$, where $a_i'$ and $b_j'$ are proper transforms of $a_i$ and $b_j$, respectively. Notice that $a_i', b_j' \ge 2$. In short, any sequence of symplectic flips that transforms $Z$ to $\widetilde{X}$ occurs between $a_i$ and $b_j$ and do not alter the other parts.
\end{remark}

Similarly, we don't need the whole curves in $[b_1,\dotsc,b_t]$ in order to transform $Z$ to $\widetilde{X}$. That is:

\begin{corollary}\label{corollary:n=2}
let $Z := [a_1,\dotsc,a_r]-1-[b_1,\dotsc,b_t]$ be an $M$-resolution of a cyclic quotient surface singularity $X$. Let $b_j$ be the initial curve of $[b_1,\dotsc,b_t]$. Set $b_j'=b_j$ or $b_j-1$. Let $X''$ is a new cyclic quotient surface singularity obtained by contracting $a_1-\dotsb-a_r-1-b_1-\dotsb-b_j'$ and let $Z'' := [a_1,\dotsc,a_r]-1-b_1-\dotsb-b_j'$. Then there is a sequence of symplectic flips that transforms $Z''$ to the minimal resolution of $X''$.
\end{corollary}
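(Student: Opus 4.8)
The plan is to reduce the corollary directly to Lemma~\ref{lemma:n=2} together with Remark~\ref{remark:keep-the-other-parts}, observing that the content of the corollary is merely that the tail of the second chain past the initial curve $b_j$ plays no role in the flipping process. First I would note that the singularity $[b_1,\dotsc,b_j']$ obtained by truncating $[b_1,\dotsc,b_t]$ at (or just before) its initial curve is again of class $T_0$: by the iterative construction of class $T_0$ singularities recalled in Section~\ref{section:symplectic-filling-P-resolution}, the initial curve $b_j$ is the image of $[4]$, and the entries $b_{j+1},\dotsc,b_t$ following it form precisely the $2$'s appended on the right by the operation $[c_1,\dotsc,c_m]\mapsto[c_1+1,c_2,\dotsc,c_m,2]$. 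Stripping them off (and, in the case $b_j'=b_j-1$, undoing the final increment of the head entry) returns a genuine class $T_0$ singularity, so $Z''=[a_1,\dotsc,a_r]-1-[b_1,\dotsc,b_j']$ is a bona fide configuration with two class $T_0$ blocks joined by a $(-1)$-curve.

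Next I would verify that $Z''$ is itself an $M$-resolution of the singularity $X''$ obtained by contracting the full chain $a_1-\dotsb-a_r-1-b_1-\dotsb-b_j'$. The only condition to check is that $K$ is nef relative to the contraction, i.e.\ that the central $(-1)$-curve passes through two class $T_0$ singularities; this holds because the inequality $b_1\ge a_1$ supplied by Lemma~\ref{lemma:b1>=a1} is inherited unchanged, the head entries $a_1,\dotsc,a_r$ and $b_1,\dotsc,b_{j}$ being untouched by the truncation. Once $Z''$ is recognized as an $M$-resolution of $X''$, Lemma~\ref{lemma:n=2} applies verbatim and yields a sequence of rational blow-ups and symplectic flips transforming $Z''$ into the minimal resolution of $X''$.

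The point that requires a little care, and which I expect to be the main obstacle, is seeing that the sequence of flips produced by Lemma~\ref{lemma:n=2} for $Z''$ is ``the same'' sequence as the one for the original $Z$, restricted to the relevant curves. This is exactly where Remark~\ref{remark:keep-the-other-parts} is decisive: every symplectic flip is, on the level of resolutions, an ordinary blow-down occurring strictly between the two initial curves $a_i$ and $b_j$, and by Proposition~\ref{proposition:initial-curve} the initial curve $b_j$ is never contracted. Consequently no flip in the process ever touches an entry with index exceeding $j$, so deleting the tail $b_{j+1},\dotsc,b_t$ changes neither which flips are available nor their effect on the head of the chain. The termination argument is likewise unaffected, since it rests solely on the non-vanishing of the proper transform of the initial curve $b_j$. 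Assembling these observations, the flip sequence for $Z$ furnished by Lemma~\ref{lemma:n=2} descends to a flip sequence for $Z''$ ending at the minimal resolution of $X''$, which is the assertion.
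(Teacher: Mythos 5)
Your closing paragraph is, by itself, essentially the paper's entire proof: by Proposition~\ref{proposition:initial-curve} the flips produced in the proof of Lemma~\ref{lemma:n=2} can never kill the initial curve $b_j$, so every flip takes place among the curves $b_1,\dotsc,b_{j-1}$ (at worst decrementing $b_j$ by one, which is exactly why the statement allows $b_j'=b_j-1$), and hence the identical flip sequence makes sense on the truncated configuration and carries $Z''$ to the minimal resolution of $X''$. Had you written only that paragraph, your argument would coincide with the paper's.

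The first two paragraphs, however, contain genuine errors. You misread the statement: in $Z'' := [a_1,\dotsc,a_r]-1-b_1-\dotsb-b_j'$ the right-hand chain is \emph{not} enclosed in brackets, so $b_1,\dotsc,b_j'$ are honest curves rather than a contracted singularity; $Z''$ carries a single singularity of class $T_0$ and is the analogue of the intermediate object $Z'$ in the proof of Lemma~\ref{lemma:n=2} \emph{after} the rational blow-up of the $b$-block --- which is precisely why the corollary asserts a sequence of flips only, with no preliminary blow-up. Moreover, your claim that $[b_1,\dotsc,b_j']$ is again of class $T_0$ is false in general: the entries following the initial curve need not be $2$'s (in $[2,5,3]$ the initial curve is $b_2$ and is followed by a $3$), and truncating without compensating the head entries destroys the $T_0$ property --- $[6,2,2]$ truncates to $[6]$ or $[5]$, neither of class $T_0$, and $[2,5,3]$ with $b_j'=b_j-1$ gives $[2,4]$, corresponding to $7/4$, which is not of the form $n^2/(na-1)$. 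Consequently, even if one did bracket the right-hand chain, $Z''$ would in general fail to be an $M$-resolution: its $(-1)$-curve would pass through only one singularity of class $T_0$, violating the relative nefness condition recalled just before Proposition~\ref{proposition:reduction}. So Lemma~\ref{lemma:n=2} cannot be applied ``verbatim'' to $Z''$ as your second paragraph asserts; the correct logic is not to reapply the lemma to a new $M$-resolution but, as in your final paragraph, to observe that the flip sequence already furnished by the lemma for $Z$ is supported entirely on $b_1,\dotsc,b_{j-1}$ and therefore restricts to $Z''$.
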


\begin{proof}
A sequence of symplectic flips in the above lemma could not kill the initial curve $b_j$, that is, the proper transform of the initial curve $b_j$ cannot be a $(-1)$-curve. So a sequence of symplectic flips that transforms $Z$ to $\widetilde{X}$ occurs from $b_1$ (possibly) upto the curve $b_{j-1}$ right before the initial curve $b_j$. Hence the assertion follows.
\end{proof}

\begin{theorem}\label{theorem:main-cyclic}
Let $Z$ be an $M$-resolution of a cyclic quotient surface singularity $X$. Then there is a sequence of rational blow-downs and symplectic antiflips that transforms $\widetilde{X}$ to $Z$.
\end{theorem}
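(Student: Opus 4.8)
The statement is exactly the reverse of the process packaged in Lemma~\ref{lemma:n=2} and Corollary~\ref{corollary:n=2}: there one passes from the $M$-resolution $Z$ up to the minimal resolution $\widetilde{X}$ by rational blow-ups and symplectic flips, whereas here one must descend from $\widetilde{X}$ down to $Z$. The plan is first to produce, for an \emph{arbitrary} cyclic $M$-resolution $Z$, a sequence of rational blow-ups and symplectic flips carrying $Z$ to $\widetilde{X}$, and then to read that sequence backwards. The second step is formal: a rational blow-up is inverse to a rational blow-down, a symplectic flip is by definition inverse to a symplectic antiflip, and symplectic deformation equivalence is symmetric; hence reversing a chain of rational blow-ups and symplectic flips yields a chain of rational blow-downs and symplectic antiflips with the two endpoints interchanged, which is precisely the assertion.

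For the forward direction I would induct on the number $k$ of $(-1)$-curves of $Z$. Since $X$ is cyclic, $Z$ is a linear chain, and, because $K_Z$ is nef, every $(-1)$-curve meets two singularities of class $T_0$; thus each $(-1)$-curve sits at a junction with a $T_0$-block on either side. If $k=0$, then $Z$ has no $(-1)$-curve, so its minimal resolution coincides with $\widetilde{X}$ and $\widetilde{X}$ is obtained from $Z$ by rationally blowing up the singularities of class $T_0$, with no flips needed. If $k\ge 1$, choose a junction $[B_L]-1-[B_R]$, orienting the chain if necessary so that a $T_0$-block lies on the left. Using Corollary~\ref{corollary:n=2} I would blow up $[B_R]$ only up to its initial curve and then apply the symplectic flips of Lemma~\ref{lemma:n=2} to $[B_L]$, again only up to its initial curve. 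By Remark~\ref{remark:keep-the-other-parts} and Proposition~\ref{proposition:initial-curve}, every such flip is confined strictly between the initial curves of $[B_L]$ and $[B_R]$ --- the initial curves are never contracted and so act as barriers --- so the chosen $(-1)$-curve is eliminated while all other junctions are left untouched. This lowers $k$ by one, and the inductive hypothesis finishes the forward construction.

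The heart of the argument, and the only place I expect real difficulty, is the locality just invoked. One must check that processing a junction never disturbs a neighbouring junction even when the two share a $T_0$-block: the point is that the flips at one junction stay on its own side of the shared block's initial curve, so the data the neighbouring junction sees --- in particular the leftmost, respectively rightmost, curve of the shared block --- is preserved verbatim. In turn this is what guarantees that the hypotheses needed to reapply Corollary~\ref{corollary:n=2} and Proposition~\ref{proposition:symplectic-flip} at the next junction still hold, most notably the inequality $b_1\ge a_1$ of Lemma~\ref{lemma:b1>=a1} and the anchor positivity $c-b_1+1\ge 1$ (respectively $c-b_1+3\ge 1$); these persist precisely because the relevant local numerical data is carried along unchanged, and because Proposition~\ref{proposition:reduction} keeps the intermediate configurations $M$-resolutions of the appropriate truncated cyclic singularities. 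Once this bookkeeping is in place the induction closes, and reversing the resulting sequence delivers the required rational blow-downs and symplectic antiflips transforming $\widetilde{X}$ into $Z$.
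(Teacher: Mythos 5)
Your proposal is correct and follows essentially the same route as the paper: the paper likewise proves the forward statement---a sequence of rational blow-ups and symplectic flips carrying $Z$ to $\widetilde{X}$---by induction on the number of class-$T_0$ singularities in a maximal chain $[P_1]-1-\dotsb-1-[P_n]$, resolving one junction at a time via Corollary~\ref{corollary:n=2} with locality supplied by Remark~\ref{remark:keep-the-other-parts} and Proposition~\ref{proposition:initial-curve}, and then reads the sequence backwards. The only cosmetic difference is that the paper processes junctions sequentially from the rightmost end (feeding the resolved chain $\widetilde{Y}$ into the next junction on the left) rather than at an arbitrarily chosen junction, while your appeal to Lemma~\ref{lemma:b1>=a1} and Proposition~\ref{proposition:reduction} reproduces bookkeeping the paper has already packaged inside the proof of Lemma~\ref{lemma:n=2}.
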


\begin{proof}
We will prove that there is a sequence of rational blow-ups and symplectic flips that transforms $Z$ to $\widetilde{X}$.

Suppose that there are \emph{isolated} singularities of class $T_0$ on $Z$, that is, the singularities are not connected by $(-1)$-curves with another ones. Then we rationally blow up $Z$ along the isolated singularities. We now assume that there are non-isolated singularities of class $T_0$ on $Z$. Suppose that $Z$ contains a linear chain
\[[P_1]-1-\dotsb-1-[P_n]\]
of non-isolated singularities $[P_i]$ of length $n \ge 2$.

During the proof below, we will show that any sequence of rational blow ups and symplectic flips that shall be introduced does not alter the other parts of $Z$; cf.~Remark~\ref{remark:keep-the-other-parts}. So we may assume that $Z$ itself is given by $[P_1]-1-\dotsb-1-[P_n]$ for simplicity. Then $Z$ is of the form
\[Z=L-[a_1,\dotsc,a_r]-1-[b_1,\dotsc,b_s]\]
where $L$ is the (possibly empty) leftmost part of $Z$.

We first rationally blow up $Z$ along $[b_1,\dotsc,b_s]$ to get
\[Z'=L-[a_1,\dotsc,a_r]-1-b_1-\dotsb-b_s\]
We claim that there is a sequence of symplectic flips that transforms $Z'$ to $\widetilde{X}$. We use an induction on the number $n-1$ of singularities of class $T_0$ on $Z'$.

Let $Y=[a_1,\dotsc,a_r]-1-b_1-\dotsb-b_s$. By Corollary~\ref{corollary:n=2} we symplectically flip $Y$ (repeatedly if necessary) so that we get a minimal resolution $\widetilde{Y}$ that is obtained by contracting $(-1)$-curves (successively if necessary) from $a_1-\dotsb-a_r-1-b_1-\dotsb-b_s$. Then we have
\[Z'' = L-\widetilde{Y}\]

If $L$ is empty, then we are done. If $L$ is not empty, $Z'^+$ is of the form
\[Z''=L'-[c_1,\dotsc,c_t]-1-\widetilde{Y}\]
Note that the minimal resolution $\widetilde{Y}$ contains the initial curve of $[a_1,\dotsc,a_r]$; Remark~\ref{remark:keep-the-other-parts}. So we can repeat the above process to $Z''$ again. Then the assertion follows by induction.
\end{proof}

\subsection{Examples}\label{subsection:Examples}

\subsubsection{}

Let $X$ be a cyclic quotient surface singularity of type $\frac{1}{37}(1,10)$, whose dual graph of the minimal resolution $\widetilde{X}$ is given by $4-4-2-2$. Let $W$ be a minimal symplectic filling of $X$ corresponding to the $M$-resolution $Z=[5,2]-1-[6,2,2]$. The sequence of rational blow-downs and symplectic antiflips are as follows:

\begin{enumerate}
\item[0.] The minimal resolution $\widetilde{X}$: $4-4-2-2$

\item[1.] A symplectic antiflip along $4-4$: $[5,2]-1-6-2-2$

\item[2.] A rational blow-down along $6-2-2$: $[5,2]-1-[6,2,2]$
\end{enumerate}

We need a rational blow-down along $6-2-2$ in the process to get $W$ from $\widetilde{X}$. It is hidden in the configuration $4-4$, whose symplectic sum gives us the desired $(-6)$-curve. So the symplectic antiflip along $4-4$ shows us clearly that there is a configuration $6-2-2$ where we have to rational blow down.

\subsubsection{}

Let $X$ be a cyclic quotient surface singularity of type $\frac{1}{81}(1, 47)$. The dual graph of the minimal resolution $\widetilde{X}$ is given by $2-4-3-3-2$. Let $W$ be a symplectic filling of $X$ corresponding to the $M$-resolution $Z=[2,5,3]-1-[2,5,3]-2$.

A sequence which transforms $\widetilde{X}$ to $W$ is as follows:

\begin{enumerate}
\item[0.] The minimal resolution $\widetilde{X}$: $2-4-3-3-2$

\item[1.] A symplectic antiflip along $4-3$: $2-[5,2]-1-5-3-2$

\item[2.] A symplectic antiflip along $2-[5,2]-1$: $[2,5,3]-1-2-5-3-2$

\item[3.] A rational blow-down along $2-5-2$: $[2,5,3]-1-[2,5,3]-2$
\end{enumerate}

We apply only one rational blow-down along $2-5-3$. Here the $(-5)$-curve is given as a symplectic sum of $2-4-3$ as before. But it is not easy to pull out the $(-2)$-curve from $\widetilde{X}$. Once again, symplectic antiflips put the desired curve on our hands.

It would be an intriguing problem to compare the above algorithm with that given in Bhupal-Ozbagci~\cite[\S4.1]{Bhupal-Ozbagci-2016} where the symplectic filling $W$ is denoted by $W_{(81,47)}((3,2,1,3,2))$.

\subsubsection{}

Let $X$ be a cyclic quotient surface singularity of type $\frac{1}{45}(1,26)$, whose dual graph of the minimal resolution $\widetilde{X}$ is given by $2-4-4-2$. Let $W$ be a minimal symplectic filling of $X$ corresponding to the $M$-resolution $Z = [2,5]-1-[5,2]$. A sequence from $\widetilde{X}$ to $W$ is as follows:

\begin{enumerate}
\item[0.] The minimal resolution $\widetilde{X}$: $2-4-4-2$

\item[1.] A rational blow-down along $4$: $2-[4]-4-2$

\item[2.] A symplectic antiflip along $2-[4]-4$: $[2,5]-1-5-2$

\item[3.] A rational blow-down along $5-2$: $[2,5]-1-[5,2]$
\end{enumerate}

In this example we need two rational blow-downs: One along $4$ and the other along $5-2$. The first rational blow-down can be easily recognized in the minimal resolution $\widetilde{X}$. But the $(-5)$-curve for the second one is not found at a glance from the rationally blown-down $2-[4]-4-2$. So a symplectic antiflip again provides us the desired negative curves.

\section{Non-cyclic quotient surface singularities}
\label{section:non-cyclic}

There are four classes of non-cyclic quotient surface singularities: Dihedral singularities, tetrahedral singularities, octahedral singularities, icosahedral singularities. Symplectic fillings of dihedral singularities are essentially determined by that of cyclic quotient surface singularities; cf.~Bhupal-Ono~\cite{Bhupal-Ono-2012}. So one may apply the same algorithm described in the previous section for dihedral singularities. Hence we deal only tetrahedral, octahedral, icosahedral singularities (denoted by \emph{TOI-singularities} for short) in this section.

Let $Y$ be a TOI-singularity and let $\widetilde{Y}$ be its minimal resolution. For an $M$-resolution $Z$ of $Y$, let $\Gamma$ be a maximal connected subgraph of the dual graph of its minimal resolution $\widetilde{Z}$ that contains all the dual graphs of singularities of class $T_0$ of $Z$.

\textbf{Case 1}. $\Gamma$ is linear.

One may apply the same procedure for cyclic quotient surface singularities described in the previous section.

\textbf{Case 2}. $\Gamma$ is non-linear.

According to the list of $P$-resolutions of TOI-singularities in HJS-\cite{Han-Jeon-Shin-2018}, there are only 9 types of $\Gamma$ described in Figure~\ref{figure:subgraph-Gamma}. Then we provide the desired sequence of rational blow-downs case by case in Figure~\ref{figure:sequence-Gamma_1}--Figure~\ref{figure:sequence-Gamma_9}. For example, in case of $\Gamma_1$, the dual grpah of the minimal resolution $\widetilde{Y}$ should contain the subgraph
\begin{equation*}
\begin{tikzpicture}[scale=0.75]

\node[bullet] (01) at (0,1) [label=right:{$-3$},label=left:{$E_1$}] {};
\node[bullet] (-10) at (-1,0) [label=below:{$-2$}] {};
\node[bullet] (00) at (0,0) [label=below:{$-4$},label=135:{$E_2$}] {};
\node[bullet] (10) at (1,0) [label=below:{$-3$}] {};

\draw (01)--(00);
\draw (-10)--(00)--(10);
\end{tikzpicture}
\end{equation*}
We first symplectically antiflip $E_1$ and $E_2$. Then we have
\begin{equation*}
\begin{tikzpicture}[scale=0.75]

\node[rectangle] (02) at (0,2) [label=right:{$-4$}] {};
\node[bullet] (01) at (0,1) [label=right:{$-1$}] {};
\node[bullet] (-10) at (-1,0) [label=below:{$-2$}] {};
\node[bullet] (00) at (0,0) [label=below:{$-5$}] {};
\node[bullet] (10) at (1,0) [label=below:{$-3$}] {};

\draw (02)--(01)--(00);
\draw (-10)--(00)--(10);
\end{tikzpicture}
\end{equation*}
We next rationally blow-down $2-5-3$ in the above configuration. Then we obtain the symplectic filling described by the graph $\Gamma$. We summarize this procedure in Figure~\ref{figure:sequence-Gamma_1}.

In the following Figures~\ref{figure:sequence-Gamma_2}--\ref{figure:sequence-Gamma_9} we describe the desired sequence of rationally blow-downs together with symplectic antiflips for each $\Gamma_i$'s.

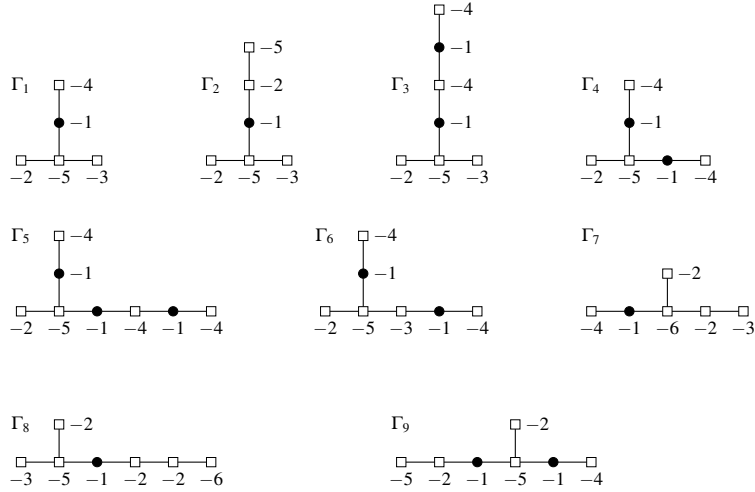
\begin{figure}
\centering
\begin{tikzpicture}[scale=0.5]

\begin{scope}
\node[empty] (-12) at (-1,2) [] {$\Gamma_1$};

\node[rectangle] (02) at (0,2) [label=right:{$-4$}] {};
\node[bullet] (01) at (0,1) [label=right:{$-1$}] {};
\node[rectangle] (-10) at (-1,0) [label=below:{$-2$}] {};
\node[rectangle] (00) at (0,0) [label=below:{$-5$}] {};
\node[rectangle] (10) at (1,0) [label=below:{$-3$}] {};

\draw (02)--(01)--(00);
\draw (-10)--(00)--(10);
\end{scope}

\begin{scope}[shift={(5,0)}]
\node[empty] (-12) at (-1,2) [] {$\Gamma_2$};

\node[rectangle] (03) at (0,3) [label=right:{$-5$}] {};
\node[rectangle] (02) at (0,2) [label=right:{$-2$}] {};
\node[bullet] (01) at (0,1) [label=right:{$-1$}] {};
\node[rectangle] (-10) at (-1,0) [label=below:{$-2$}] {};
\node[rectangle] (00) at (0,0) [label=below:{$-5$}] {};
\node[rectangle] (10) at (1,0) [label=below:{$-3$}] {};

\draw (03)--(02)--(01)--(00);
\draw (-10)--(00)--(10);
\end{scope}

\begin{scope}[shift={(10,0)}]
\node[empty] (-12) at (-1,2) [] {$\Gamma_3$};

\node[rectangle] (04) at (0,4) [label=right:{$-4$}] {};
\node[bullet] (03) at (0,3) [label=right:{$-1$}] {};
\node[rectangle] (02) at (0,2) [label=right:{$-4$}] {};
\node[bullet] (01) at (0,1) [label=right:{$-1$}] {};
\node[rectangle] (-10) at (-1,0) [label=below:{$-2$}] {};
\node[rectangle] (00) at (0,0) [label=below:{$-5$}] {};
\node[rectangle] (10) at (1,0) [label=below:{$-3$}] {};

\draw (04)--(03)--(02)--(01)--(00);
\draw (-10)--(00)--(10);
\end{scope}

\begin{scope}[shift={(15,0)}]
\node[empty] (-12) at (-1,2) [] {$\Gamma_4$};

\node[rectangle] (02) at (0,2) [label=right:{$-4$}] {};
\node[bullet] (01) at (0,1) [label=right:{$-1$}] {};
\node[rectangle] (-10) at (-1,0) [label=below:{$-2$}] {};
\node[rectangle] (00) at (0,0) [label=below:{$-5$}] {};
\node[bullet] (10) at (1,0) [label=below:{$-1$}] {};
\node[rectangle] (20) at (2,0) [label=below:{$-4$}] {};

\draw (02)--(01)--(00);
\draw (-10)--(00)--(10)--(20);
\end{scope}

\begin{scope}[shift={(0,-4)}]
\node[empty] (-12) at (-1,2) [] {$\Gamma_5$};

\node[rectangle] (02) at (0,2) [label=right:{$-4$}] {};
\node[bullet] (01) at (0,1) [label=right:{$-1$}] {};
\node[rectangle] (-10) at (-1,0) [label=below:{$-2$}] {};
\node[rectangle] (00) at (0,0) [label=below:{$-5$}] {};
\node[bullet] (10) at (1,0) [label=below:{$-1$}] {};
\node[rectangle] (20) at (2,0) [label=below:{$-4$}] {};
\node[bullet] (30) at (3,0) [label=below:{$-1$}] {};
\node[rectangle] (40) at (4,0) [label=below:{$-4$}] {};

\draw (02)--(01)--(00);
\draw (-10)--(00)--(10)--(20)--(30)--(40);
\end{scope}

\begin{scope}[shift={(8,-4)}]
\node[empty] (-12) at (-1,2) [] {$\Gamma_6$};

\node[rectangle] (02) at (0,2) [label=right:{$-4$}] {};
\node[bullet] (01) at (0,1) [label=right:{$-1$}] {};
\node[rectangle] (-10) at (-1,0) [label=below:{$-2$}] {};
\node[rectangle] (00) at (0,0) [label=below:{$-5$}] {};
\node[rectangle] (10) at (1,0) [label=below:{$-3$}] {};
\node[bullet] (20) at (2,0) [label=below:{$-1$}] {};
\node[rectangle] (30) at (3,0) [label=below:{$-4$}] {};

\draw (02)--(01)--(00);
\draw (-10)--(00)--(10)--(20)--(30);
\end{scope}

\begin{scope}[shift={(16,-4)}]
\node[empty] (-22) at (-2,2) [] {$\Gamma_7$};

\node[rectangle] (01) at (0,1) [label=right:{$-2$}] {};
\node[rectangle] (-20) at (-2,0) [label=below:{$-4$}] {};
\node[bullet] (-10) at (-1,0) [label=below:{$-1$}] {};
\node[rectangle] (00) at (0,0) [label=below:{$-6$}] {};
\node[rectangle] (10) at (1,0) [label=below:{$-2$}] {};
\node[rectangle] (20) at (2,0) [label=below:{$-3$}] {};

\draw (01)--(00);
\draw (-20)--(-10)--(00)--(10)--(20);
\end{scope}

\begin{scope}[shift={(0,-8)}]
\node[empty] (-11) at (-1,1) [] {$\Gamma_8$};

\node[rectangle] (01) at (0,1) [label=right:{$-2$}] {};
\node[rectangle] (-10) at (-1,0) [label=below:{$-3$}] {};
\node[rectangle] (00) at (0,0) [label=below:{$-5$}] {};
\node[bullet] (10) at (1,0) [label=below:{$-1$}] {};
\node[rectangle] (20) at (2,0) [label=below:{$-2$}] {};
\node[rectangle] (30) at (3,0) [label=below:{$-2$}] {};
\node[rectangle] (40) at (4,0) [label=below:{$-6$}] {};

\draw (01)--(00);
\draw (-10)--(00)--(10)--(20)--(30)--(40);
\end{scope}

\begin{scope}[shift={(12,-8)}]
\node[empty] (-31) at (-3,1) [] {$\Gamma_9$};

\node[rectangle] (01) at (0,1) [label=right:{$-2$}] {};
\node[rectangle] (-30) at (-3,0) [label=below:{$-5$}] {};
\node[rectangle] (-20) at (-2,0) [label=below:{$-2$}] {};
\node[bullet] (-10) at (-1,0) [label=below:{$-1$}] {};
\node[rectangle] (00) at (0,0) [label=below:{$-5$}] {};
\node[bullet] (10) at (1,0) [label=below:{$-1$}] {};
\node[rectangle] (20) at (2,0) [label=below:{$-4$}] {};

\draw (01)--(00);
\draw (-30)--(-20)--(-10)--(00)--(10)--(20);
\end{scope}
\end{tikzpicture}

\caption{Non-linear maximal subgraph $\Gamma$}
\label{figure:subgraph-Gamma}
\end{figure}


\begin{figure}
\begin{tikzpicture}[scale=0.5]
\begin{scope}[shift={(0,0)}]
\node[bullet] (01) at (0,1) [label=right:{$-3$}] {};
\node[bullet] (-10) at (-1,0) [label=below:{$-2$}] {};
\node[bullet] (00) at (0,0) [label=below:{$-4$}] {};
\node[bullet] (10) at (1,0) [label=below:{$-3$}] {};

\draw (01)--(00);
\draw (-10)--(00)--(10);
\end{scope}

\draw[->,decorate,decoration={snake,amplitude=.4mm,segment length=2mm,post length=1mm}] (1.5,1)--(3.5,1) node [above,align=center,midway]
{antiflip};

\begin{scope}[shift={(5,0)}]
\node[rectangle] (02) at (0,2) [label=right:{$-4$}] {};
\node[bullet] (01) at (0,1) [label=right:{$-1$}] {};
\node[bullet] (-10) at (-1,0) [label=below:{$-2$}] {};
\node[bullet] (00) at (0,0) [label=below:{$-5$}] {};
\node[bullet] (10) at (1,0) [label=below:{$-3$}] {};

\draw (02)--(01)--(00);
\draw (-10)--(00)--(10);
\end{scope}

\draw[->,decorate,decoration={snake,amplitude=.4mm,segment length=2mm,post length=1mm}] (6.5,1)--(8.5,1) node [above,align=center,midway]
{$\mathbb{Q}$-BLDN};

\begin{scope}[shift={(10,0)}]
\node[rectangle] (02) at (0,2) [label=right:{$-4$}] {};
\node[bullet] (01) at (0,1) [label=right:{$-1$}] {};
\node[rectangle] (-10) at (-1,0) [label=below:{$-2$}] {};
\node[rectangle] (00) at (0,0) [label=below:{$-5$}] {};
\node[rectangle] (10) at (1,0) [label=below:{$-3$}] {};

\draw (02)--(01)--(00);
\draw (-10)--(00)--(10);
\end{scope}
\end{tikzpicture}
\caption{A sequence of rational blow-downs for $\Gamma_1$}
\label{figure:sequence-Gamma_1}
\end{figure}


\begin{figure}
\begin{tikzpicture}[scale=0.5]
\begin{scope}[shift={(0,0)}]

\node[bullet] (01) at (0,1) [label=right:{$-4$}] {};
\node[bullet] (-10) at (-1,0) [label=below:{$-2$}] {};
\node[bullet] (00) at (0,0) [label=below:{$-3$}] {};
\node[bullet] (10) at (1,0) [label=below:{$-3$}] {};

\draw (01)--(00);
\draw (-10)--(00)--(10);
\end{scope}

\draw[->,decorate,decoration={snake,amplitude=.4mm,segment length=2mm,post length=1mm}] (1.5,1)--(3.5,1) node [above,align=center,midway]
{antiflip};

\begin{scope}[shift={(5,0)}]

\node[rectangle] (03) at (0,3) [label=right:{$-5$}] {};
\node[rectangle] (02) at (0,2) [label=right:{$-2$}] {};
\node[bullet] (01) at (0,1) [label=right:{$-1$}] {};
\node[bullet] (-10) at (-1,0) [label=below:{$-2$}] {};
\node[bullet] (00) at (0,0) [label=below:{$-5$}] {};
\node[bullet] (10) at (1,0) [label=below:{$-3$}] {};

\draw (03)--(02)--(01)--(00);
\draw (-10)--(00)--(10);
\end{scope}

\draw[->,decorate,decoration={snake,amplitude=.4mm,segment length=2mm,post length=1mm}] (6.5,1)--(8.5,1) node [above,align=center,midway]
{$\mathbb{Q}$-BLDN};

\begin{scope}[shift={(10,0)}]
\node[bullet] (02) at (0,2) [label=right:{$-3$}] {};
\node[bullet] (01) at (0,1) [label=right:{$-2$}] {};
\node[bullet] (-10) at (-1,0) [label=below:{$-2$}] {};
\node[bullet] (00) at (0,0) [label=below:{$-3$}] {};
\node[bullet] (10) at (1,0) [label=below:{$-3$}] {};

\draw (02)--(01)--(00);
\draw (-10)--(00)--(10);
\end{scope}
\end{tikzpicture}
\caption{A sequence of rational blow-downs for $\Gamma_2$}
\label{figure:sequence-Gamma_2}
\end{figure}


\begin{figure}
\begin{tikzpicture}[scale=0.5]
\begin{scope}[shift={(0,0)}]
\node[bullet] (02) at (0,2) [label=right:{$-3$}] {};
\node[bullet] (01) at (0,1) [label=right:{$-2$}] {};
\node[bullet] (-10) at (-1,0) [label=below:{$-2$}] {};
\node[bullet] (00) at (0,0) [label=below:{$-3$}] {};
\node[bullet] (10) at (1,0) [label=below:{$-3$}] {};

\draw (02)--(01)--(00);
\draw (-10)--(00)--(10);
\end{scope}

\draw[->,decorate,decoration={snake,amplitude=.4mm,segment length=2mm,post length=1mm}] (1.5,1)--(3.5,1) node [above,align=center,midway]
{antiflip};

\begin{scope}[shift={(5,0)}]
\node[rectangle] (03) at (0,3) [label=right:{$-4$}] {};
\node[bullet] (02) at (0,2) [label=right:{$-1$}] {};
\node[bullet] (01) at (0,1) [label=right:{$-3$}] {};
\node[bullet] (-10) at (-1,0) [label=below:{$-2$}] {};
\node[bullet] (00) at (0,0) [label=below:{$-4$}] {};
\node[bullet] (10) at (1,0) [label=below:{$-3$}] {};

\draw (03)--(02)--(01)--(00);
\draw (-10)--(00)--(10);
\end{scope}

\draw[->,decorate,decoration={snake,amplitude=.4mm,segment length=2mm,post length=1mm}] (6.5,1)--(8.5,1) node [above,align=center,midway]
{antiflip};

\begin{scope}[shift={(10,0)}]
\node[rectangle] (04) at (0,4) [label=right:{$-4$}] {};
\node[bullet] (03) at (0,3) [label=right:{$-1$}] {};
\node[rectangle] (02) at (0,2) [label=right:{$-4$}] {};
\node[bullet] (01) at (0,1) [label=right:{$-1$}] {};
\node[bullet] (-10) at (-1,0) [label=below:{$-2$}] {};
\node[bullet] (00) at (0,0) [label=below:{$-5$}] {};
\node[bullet] (10) at (1,0) [label=below:{$-3$}] {};

\draw (04)--(03)--(02)--(01)--(00);
\draw (-10)--(00)--(10);
\end{scope}

\draw[->,decorate,decoration={snake,amplitude=.4mm,segment length=2mm,post length=1mm}] (11.5,1)--(13.5,1) node [above,align=center,midway]
{$\mathbb{Q}$-BLDN};

\begin{scope}[shift={(15,0)}]
\node[rectangle] (04) at (0,4) [label=right:{$-4$}] {};
\node[bullet] (03) at (0,3) [label=right:{$-1$}] {};
\node[rectangle] (02) at (0,2) [label=right:{$-4$}] {};
\node[bullet] (01) at (0,1) [label=right:{$-1$}] {};
\node[rectangle] (-10) at (-1,0) [label=below:{$-2$}] {};
\node[rectangle] (00) at (0,0) [label=below:{$-5$}] {};
\node[rectangle] (10) at (1,0) [label=below:{$-3$}] {};

\draw (04)--(03)--(02)--(01)--(00);
\draw (-10)--(00)--(10);
\end{scope}
\end{tikzpicture}
\caption{A sequence of rational blow-downs for $\Gamma_3$}
\label{figure:sequence-Gamma_3}
\end{figure}


\begin{figure}
\begin{tikzpicture}[scale=0.5]
\begin{scope}[shift={(0,0)}]
\node[bullet] (01) at (0,1) [label=right:{$-3$}] {};
\node[bullet] (-10) at (-1,0) [label=below:{$-2$}] {};
\node[bullet] (00) at (0,0) [label=below:{$-3$}] {};
\node[bullet] (10) at (1,0) [label=below:{$-3$}] {};

\draw (01)--(00);
\draw (-10)--(00)--(10);
\end{scope}

\draw[->,decorate,decoration={snake,amplitude=.4mm,segment length=2mm,post length=1mm}] (1.5,1)--(3.5,1) node [above,align=center,midway]
{antiflip};

\begin{scope}[shift={(5,0)}]
\node[bullet] (01) at (0,1) [label=right:{$-3$}] {};
\node[bullet] (-10) at (-1,0) [label=below:{$-2$}] {};
\node[bullet] (00) at (0,0) [label=below:{$-4$}] {};
\node[bullet] (10) at (1,0) [label=below:{$-1$}] {};
\node[rectangle] (20) at (2,0) [label=below:{$-4$}] {};

\draw (01)--(00);
\draw (-10)--(00)--(10)--(20);
\end{scope}

\draw[->,decorate,decoration={snake,amplitude=.4mm,segment length=2mm,post length=1mm}] (7.5,1)--(9.5,1) node [above,align=center,midway]
{antiflip};

\begin{scope}[shift={(11,0)}]
\node[rectangle] (02) at (0,2) [label=right:{$-4$}] {};
\node[bullet] (01) at (0,1) [label=right:{$-1$}] {};
\node[bullet] (-10) at (-1,0) [label=below:{$-2$}] {};
\node[bullet] (00) at (0,0) [label=below:{$-5$}] {};
\node[bullet] (10) at (1,0) [label=below:{$-1$}] {};
\node[rectangle] (20) at (2,0) [label=below:{$-4$}] {};

\draw (02)--(01)--(00);
\draw (-10)--(00)--(10)--(20);
\end{scope}

\draw[->,decorate,decoration={snake,amplitude=.4mm,segment length=2mm,post length=1mm}] (13.5,1)--(15.5,1) node [above,align=center,midway]
{$\mathbb{Q}$-BLDN};

\begin{scope}[shift={(17,0)}]
\node[rectangle] (02) at (0,2) [label=right:{$-4$}] {};
\node[bullet] (01) at (0,1) [label=right:{$-1$}] {};
\node[rectangle] (-10) at (-1,0) [label=below:{$-2$}] {};
\node[rectangle] (00) at (0,0) [label=below:{$-5$}] {};
\node[bullet] (10) at (1,0) [label=below:{$-1$}] {};
\node[rectangle] (20) at (2,0) [label=below:{$-4$}] {};

\draw (02)--(01)--(00);
\draw (-10)--(00)--(10)--(20);
\end{scope}
\end{tikzpicture}
\caption{A sequence of rational blow-downs for $\Gamma_4$}
\label{figure:sequence-Gamma_4}
\end{figure}


\begin{figure}
\begin{tikzpicture}[scale=0.5]
\begin{scope}[shift={(0,0)}]
\node[bullet] (01) at (0,1) [label=right:{$-3$}] {};
\node[bullet] (-10) at (-1,0) [label=below:{$-2$}] {};
\node[bullet] (00) at (0,0) [label=below:{$-3$}] {};
\node[bullet] (10) at (1,0) [label=below:{$-2$}] {};
\node[bullet] (20) at (2,0) [label=below:{$-3$}] {};

\draw (01)--(00);
\draw (-10)--(00)--(10)--(20);
\end{scope}

\draw[->,decorate,decoration={snake,amplitude=.4mm,segment length=2mm,post length=1mm}] (2.5,1)--(4.5,1) node [above,align=center,midway]
{antiflip};

\begin{scope}[shift={(6,0)}]
\node[bullet] (01) at (0,1) [label=right:{$-3$}] {};
\node[bullet] (-10) at (-1,0) [label=below:{$-2$}] {};
\node[bullet] (00) at (0,0) [label=below:{$-3$}] {};
\node[bullet] (10) at (1,0) [label=below:{$-3$}] {};
\node[bullet] (20) at (2,0) [label=below:{$-1$}] {};
\node[rectangle] (30) at (3,0) [label=below:{$-4$}] {};

\draw (01)--(00);
\draw (-10)--(00)--(10)--(20)--(30);
\end{scope}

\draw[->,decorate,decoration={snake,amplitude=.4mm,segment length=2mm,post length=1mm}] (9.5,1)--(11.5,1) node [above,align=center,midway]
{antiflip};

\begin{scope}[shift={(13,0)}]
\node[bullet] (01) at (0,1) [label=right:{$-3$}] {};
\node[bullet] (-10) at (-1,0) [label=below:{$-2$}] {};
\node[bullet] (00) at (0,0) [label=below:{$-4$}] {};
\node[bullet] (10) at (1,0) [label=below:{$-1$}] {};
\node[rectangle] (20) at (2,0) [label=below:{$-4$}] {};
\node[bullet] (30) at (3,0) [label=below:{$-1$}] {};
\node[rectangle] (40) at (4,0) [label=below:{$-4$}] {};

\draw (01)--(00);
\draw (-10)--(00)--(10)--(20)--(30)--(40);
\end{scope}

\draw[->,decorate,decoration={snake,amplitude=.4mm,segment length=2mm,post length=1mm}] (-1,-4)--(1,-4) node [above,align=center,midway]
{antiflip};

\begin{scope}[shift={(2.5,-5)}]
\node[rectangle] (02) at (0,2) [label=right:{$-4$}] {};
\node[bullet] (01) at (0,1) [label=right:{$-1$}] {};
\node[bullet] (-10) at (-1,0) [label=below:{$-2$}] {};
\node[bullet] (00) at (0,0) [label=below:{$-5$}] {};
\node[bullet] (10) at (1,0) [label=below:{$-1$}] {};
\node[rectangle] (20) at (2,0) [label=below:{$-4$}] {};
\node[bullet] (30) at (3,0) [label=below:{$-1$}] {};
\node[rectangle] (40) at (4,0) [label=below:{$-4$}] {};

\draw (02)--(01)--(00);
\draw (-10)--(00)--(10)--(20)--(30)--(40);
\end{scope}

\draw[->,decorate,decoration={snake,amplitude=.4mm,segment length=2mm,post length=1mm}] (7,-4)--(9,-4) node [above,align=center,midway]
{$\mathbb{Q}$-BLDN};

\begin{scope}[shift={(10.5,-5)}]
\node[rectangle] (02) at (0,2) [label=right:{$-4$}] {};
\node[bullet] (01) at (0,1) [label=right:{$-1$}] {};
\node[rectangle] (-10) at (-1,0) [label=below:{$-2$}] {};
\node[rectangle] (00) at (0,0) [label=below:{$-5$}] {};
\node[bullet] (10) at (1,0) [label=below:{$-1$}] {};
\node[rectangle] (20) at (2,0) [label=below:{$-4$}] {};
\node[bullet] (30) at (3,0) [label=below:{$-1$}] {};
\node[rectangle] (40) at (4,0) [label=below:{$-4$}] {};

\draw (02)--(01)--(00);
\draw (-10)--(00)--(10)--(20)--(30)--(40);
\end{scope}
\end{tikzpicture}
\caption{A sequence of rational blow-downs for $\Gamma_5$}
\label{figure:sequence-Gamma_5}
\end{figure}


\begin{figure}
\begin{tikzpicture}[scale=0.5]
\begin{scope}[shift={(0,0)}]
\node[bullet] (01) at (0,1) [label=right:{$-3$}] {};
\node[bullet] (-10) at (-1,0) [label=below:{$-2$}] {};
\node[bullet] (00) at (0,0) [label=below:{$-4$}] {};
\node[bullet] (10) at (1,0) [label=below:{$-2$}] {};
\node[bullet] (20) at (2,0) [label=below:{$-3$}] {};

\draw (01)--(00);
\draw (-10)--(00)--(10)--(20);
\end{scope}

\draw[->,decorate,decoration={snake,amplitude=.4mm,segment length=2mm,post length=1mm}] (1.5,1)--(3.5,1) node [above,align=center,midway]
{antiflip};

\begin{scope}[shift={(5,0)}]
\node[bullet] (01) at (0,1) [label=right:{$-3$}] {};
\node[bullet] (-10) at (-1,0) [label=below:{$-2$}] {};
\node[bullet] (00) at (0,0) [label=below:{$-4$}] {};
\node[bullet] (10) at (1,0) [label=below:{$-3$}] {};
\node[bullet] (20) at (2,0) [label=below:{$-1$}] {};
\node[rectangle] (30) at (3,0) [label=below:{$-4$}] {};

\draw (01)--(00);
\draw (-10)--(00)--(10)--(20)--(30);
\end{scope}

\draw[->,decorate,decoration={snake,amplitude=.4mm,segment length=2mm,post length=1mm}] (7.5,1)--(9.5,1) node [above,align=center,midway]
{antiflip};

\begin{scope}[shift={(11,0)}]
\node[rectangle] (02) at (0,2) [label=right:{$-4$}] {};
\node[bullet] (01) at (0,1) [label=right:{$-1$}] {};
\node[bullet] (-10) at (-1,0) [label=below:{$-2$}] {};
\node[bullet] (00) at (0,0) [label=below:{$-5$}] {};
\node[bullet] (10) at (1,0) [label=below:{$-3$}] {};
\node[bullet] (20) at (2,0) [label=below:{$-1$}] {};
\node[rectangle] (30) at (3,0) [label=below:{$-4$}] {};

\draw (02)--(01)--(00);
\draw (-10)--(00)--(10)--(20)--(30);
\end{scope}

\draw[->,decorate,decoration={snake,amplitude=.4mm,segment length=2mm,post length=1mm}] (13.5,1)--(15.5,1) node [above,align=center,midway]
{$\mathbb{Q}$-BLDN};

\begin{scope}[shift={(17,0)}]
\node[rectangle] (02) at (0,2) [label=right:{$-4$}] {};
\node[bullet] (01) at (0,1) [label=right:{$-1$}] {};
\node[rectangle] (-10) at (-1,0) [label=below:{$-2$}] {};
\node[rectangle] (00) at (0,0) [label=below:{$-5$}] {};
\node[rectangle] (10) at (1,0) [label=below:{$-3$}] {};
\node[bullet] (20) at (2,0) [label=below:{$-1$}] {};
\node[rectangle] (30) at (3,0) [label=below:{$-4$}] {};

\draw (02)--(01)--(00);
\draw (-10)--(00)--(10)--(20)--(30);
\end{scope}
\end{tikzpicture}
\caption{A sequence of rational blow-downs for $\Gamma_6$}
\label{figure:sequence-Gamma_6}
\end{figure}


\begin{figure}
\begin{tikzpicture}[scale=0.5]
\begin{scope}[shift={(0,0)}]
\node[bullet] (01) at (0,1) [label=right:{$-2$}] {};
\node[bullet] (-10) at (-1,0) [label=below:{$-3$}] {};
\node[bullet] (00) at (0,0) [label=below:{$-5$}] {};
\node[bullet] (10) at (1,0) [label=below:{$-2$}] {};
\node[bullet] (20) at (2,0) [label=below:{$-3$}] {};

\draw (01)--(00);
\draw (-10)--(00)--(10)--(20);
\end{scope}

\draw[->,decorate,decoration={snake,amplitude=.4mm,segment length=2mm,post length=1mm}] (2.5,1)--(4.5,1) node [above,align=center,midway]
{antiflip};

\begin{scope}[shift={(7,0)}]
\node[bullet] (01) at (0,1) [label=right:{$-2$}] {};
\node[rectangle] (-20) at (-2,0) [label=below:{$-4$}] {};
\node[bullet] (-10) at (-1,0) [label=below:{$-1$}] {};
\node[bullet] (00) at (0,0) [label=below:{$-6$}] {};
\node[bullet] (10) at (1,0) [label=below:{$-2$}] {};
\node[bullet] (20) at (2,0) [label=below:{$-3$}] {};

\draw (01)--(00);
\draw (-20)--(-10)--(00)--(10)--(20);
\end{scope}

\draw[->,decorate,decoration={snake,amplitude=.4mm,segment length=2mm,post length=1mm}] (9.5,1)--(11.1,1) node [above,align=center,midway]
{$\mathbb{Q}$-BLDN};

\begin{scope}[shift={(13.5,0)}]
\node[rectangle] (01) at (0,1) [label=right:{$-2$}] {};
\node[rectangle] (-20) at (-2,0) [label=below:{$-4$}] {};
\node[bullet] (-10) at (-1,0) [label=below:{$-1$}] {};
\node[rectangle] (00) at (0,0) [label=below:{$-6$}] {};
\node[rectangle] (10) at (1,0) [label=below:{$-2$}] {};
\node[rectangle] (20) at (2,0) [label=below:{$-3$}] {};

\draw (01)--(00);
\draw (-20)--(-10)--(00)--(10)--(20);
\end{scope}
\end{tikzpicture}
\caption{A sequence of rational blow-downs for $\Gamma_7$}
\label{figure:sequence-Gamma_7}
\end{figure}


\begin{figure}
\begin{tikzpicture}[scale=0.5]
\begin{scope}[shift={(0,0)}]
\node[bullet] (01) at (0,1) [label=right:{$-2$}] {};
\node[bullet] (-10) at (-1,0) [label=below:{$-3$}] {};
\node[bullet] (00) at (0,0) [label=below:{$-2$}] {};
\node[bullet] (10) at (1,0) [label=below:{$-5$}] {};

\draw (01)--(00);
\draw (-10)--(00)--(10);
\end{scope}

\draw[->,decorate,decoration={snake,amplitude=.4mm,segment length=2mm,post length=1mm}] (1.5,1)--(3.5,1) node [above,align=center,midway]
{antiflip};

\begin{scope}[shift={(5,0)}]
\node[bullet] (01) at (0,1) [label=right:{$-2$}] {};
\node[bullet] (-10) at (-1,0) [label=below:{$-3$}] {};
\node[bullet] (00) at (0,0) [label=below:{$-5$}] {};
\node[bullet] (10) at (1,0) [label=below:{$-1$}] {};
\node[rectangle] (20) at (2,0) [label=below:{$-2$}] {};
\node[rectangle] (30) at (3,0) [label=below:{$-2$}] {};
\node[rectangle] (40) at (4,0) [label=below:{$-6$}] {};

\draw (01)--(00);
\draw (-10)--(00)--(10)--(20)--(30)--(40);
\end{scope}

\draw[->,decorate,decoration={snake,amplitude=.4mm,segment length=2mm,post length=1mm}] (9.5,1)--(11.1,1) node [above,align=center,midway]
{$\mathbb{Q}$-BLDN};

\begin{scope}[shift={(12.5,0)}]
\node[rectangle] (01) at (0,1) [label=right:{$-2$}] {};
\node[rectangle] (-10) at (-1,0) [label=below:{$-3$}] {};
\node[rectangle] (00) at (0,0) [label=below:{$-5$}] {};
\node[bullet] (10) at (1,0) [label=below:{$-1$}] {};
\node[rectangle] (20) at (2,0) [label=below:{$-2$}] {};
\node[rectangle] (30) at (3,0) [label=below:{$-2$}] {};
\node[rectangle] (40) at (4,0) [label=below:{$-6$}] {};

\draw (01)--(00);
\draw (-10)--(00)--(10)--(20)--(30)--(40);
\end{scope}
\end{tikzpicture}
\caption{A sequence of rational blow-downs for $\Gamma_8$}
\label{figure:sequence-Gamma_8}
\end{figure}


\begin{figure}
\begin{tikzpicture}[scale=0.5]
\begin{scope}[shift={(0,0)}]
\node[bullet] (01) at (0,1) [label=right:{$-2$}] {};
\node[bullet] (-10) at (-1,0) [label=below:{$-4$}] {};
\node[bullet] (00) at (0,0) [label=below:{$-2$}] {};
\node[bullet] (10) at (1,0) [label=below:{$-3$}] {};

\draw (01)--(00);
\draw (-10)--(00)--(10);
\end{scope}

\draw[->,decorate,decoration={snake,amplitude=.4mm,segment length=2mm,post length=1mm}] (1.5,1)--(3.5,1) node [above,align=center,midway]
{antiflip};

\begin{scope}[shift={(6,0)}]
\node[bullet] (01) at (0,1) [label=right:{$-2$}] {};
\node[rectangle] (-30) at (-3,0) [label=below:{$-5$}] {};
\node[rectangle] (-20) at (-2,0) [label=below:{$-2$}] {};
\node[bullet] (-10) at (-1,0) [label=below:{$-1$}] {};
\node[bullet] (00) at (0,0) [label=below:{$-4$}] {};
\node[bullet] (10) at (1,0) [label=below:{$-3$}] {};

\draw (01)--(00);
\draw (-30)--(-20)--(-10)--(00)--(10);
\end{scope}

\draw[->,decorate,decoration={snake,amplitude=.4mm,segment length=2mm,post length=1mm}] (7.5,1)--(9.5,1) node [above,align=center,midway]
{antiflip};

\begin{scope}[shift={(12,0)}]
\node[bullet] (01) at (0,1) [label=right:{$-2$}] {};
\node[rectangle] (-30) at (-3,0) [label=below:{$-5$}] {};
\node[rectangle] (-20) at (-2,0) [label=below:{$-2$}] {};
\node[bullet] (-10) at (-1,0) [label=below:{$-1$}] {};
\node[bullet] (00) at (0,0) [label=below:{$-5$}] {};
\node[bullet] (10) at (1,0) [label=below:{$-1$}] {};
\node[rectangle] (20) at (2,0) [label=below:{$-4$}] {};

\draw (01)--(00);
\draw (-30)--(-20)--(-10)--(00)--(10)--(20);
\end{scope}

\draw[->,decorate,decoration={snake,amplitude=.4mm,segment length=2mm,post length=1mm}] (14.5,1)--(16.5,1) node [above,align=center,midway]
{$\mathbb{Q}$-BLDN};

\begin{scope}[shift={(19,0)}]
\node[rectangle] (01) at (0,1) [label=right:{$-2$}] {};
\node[rectangle] (-30) at (-3,0) [label=below:{$-5$}] {};
\node[rectangle] (-20) at (-2,0) [label=below:{$-2$}] {};
\node[bullet] (-10) at (-1,0) [label=below:{$-1$}] {};
\node[rectangle] (00) at (0,0) [label=below:{$-5$}] {};
\node[bullet] (10) at (1,0) [label=below:{$-1$}] {};
\node[rectangle] (20) at (2,0) [label=below:{$-4$}] {};

\draw (01)--(00);
\draw (-30)--(-20)--(-10)--(00)--(10)--(20);
\end{scope}
\end{tikzpicture}
\caption{A sequence of rational blow-downs for $\Gamma_9$}
\label{figure:sequence-Gamma_9}
\end{figure}

\end{document}